\documentclass[10pt,twoside,a4paper]{article}
\usepackage{amsfonts,amssymb,amsmath,amscd,latexsym,makeidx,theorem}
\author{Luca Martinazzi}
\title{Classification of solutions to the higher order Liouville's equation on $\R{2m}$}

\newtheorem{trm}{Theorem}
\newtheorem{prop}[trm]{Proposition}
\newtheorem{cor}[trm]{Corollary}
\newtheorem{lemma}[trm]{Lemma}

\newcommand{\R}[1]{\mathbb{R}^{#1}}

\newcommand{\ve}{\varepsilon}
\newcommand{\bs}{\backslash}

\newenvironment{proof}{\noindent\emph{Proof.}}{\hfill$\square$\medskip}

\newenvironment{rmk}{\medskip\noindent\emph{Remark.}}{\hfill$\bullet$\medskip}

\DeclareMathOperator{\diver}{div}
\DeclareMathOperator{\loc}{loc}

\DeclareMathOperator*{\Intm}{\int\!\!\!\!\!\! \rule[2.6pt]{6.5pt}{.4pt}}
\DeclareMathOperator{\intm}{\int\!\!\!\!\!\!\: --}
\DeclareMathOperator{\Ric}{Ric}
\DeclareMathOperator{\vol}{vol}

\begin{document}
\maketitle

\begin{abstract}
We classify the solutions to the equation $(-\Delta)^m u=(2m-1)!e^{2mu}$ on $\R{2m}$ giving rise to a metric $g=e^{2u}g_{\R{2m}}$ with finite total $Q$-curvature in terms of analytic and geometric properties. The analytic conditions involve the growth rate of $u$ and the asymptotic behaviour of $\Delta u(x)$ as $|x|\to\infty$. As a consequence we give a geometric characterization in terms of the scalar curvature of the metric $e^{2u}g_{\R{2m}}$ at infinity, and we observe that the pull-back of this metric to $S^{2m}$ via the stereographic projection can be extended to a smooth Riemannian metric if and only if it is round.
\end{abstract}

\section{Introduction and statement of the main theorems}

The study of the Paneitz operators has moved into the center of conformal geometry in the last decades, in part with regard to the problem of prescribing the $Q$-curvature. 
Given a $4$-dimensional Riemannian manifold $(M,g)$, the $Q$-curvature $Q^4_g$ and the Paneitz operator $P^4_g$ have been introduced by Branson-Oersted \cite{BO} and Paneitz \cite{pan}:
\begin{eqnarray*}
Q^4_g&:=& -\frac{1}{6}\big(\Delta_g R_g - R^2_g +3 |\Ric_g|^2\big)\\
P^4_g(f)&:=&\Delta^2_g f +\diver\Big(\frac{2}{3} R_g g -2\Ric_g\Big)df,\quad \forall f\in C^{\infty}(M),
\end{eqnarray*}
where $R_g$ and $\Ric_g$ denote the scalar and Ricci curvatures of $g$.
Higher order $Q$-curvatures $Q^n$ and Paneitz operators $P^n$ have been introduced in \cite{bra} and \cite{GJMS}. Their interest lies in their covariant nature: considering in dimension $2m$ the conformal metric $g_u:=e^{2u}g$, we have
\begin{equation}\label{gauss}
P^{2m}_{g_u}=e^{-2m u}P^{2m}_g, \quad P^{2m}_g u +Q^{2m}_g=Q_{g_u}^{2m} e^{2mu},
\end{equation}
see for instance \cite{cha} Chapter 4. The last identity is a generalized version of Gau\ss's identity: in dimension $2$
$$-\Delta_{g} u +K_g=K_{g_u}e^{2u},$$
where $K_g$ is the Gaussian curvature, and $\Delta_g$ is the Laplace-Beltrami operator with the analysts' sign. Indeed, in dimension 2 we have $P^2_g=-\Delta_g$ and $Q^2_g=K_g$. Moreover $\Delta_{g_u}=e^{-2u}\Delta_g$.
Another interesting fact is that the total $Q$-curvature is a global conformal invariant: if $M$ is closed and $2m$-dimensional,
$$\int_M Q_{g_u}^{2m}\mathrm{dvol}_{g_{u}}=\int_M Q_g^{2m} \mathrm{dvol}_g.$$
Further evidence of the geometric relevance of the $Q$-curvatures is given by the Gauss-Bonnet-Chern's theorem \cite{che}: on a locally conformally flat closed mani\-fold of dimension $2m$, since $Q^{2m}_g$ is a multiple of the Pfaffian plus a divergence term (see \cite{BGP}), we have
$$\int_M Q^{2m}_g \mathrm{dvol}_g =[(2m-2)!!]^2\vol(S^{2m-1})\chi(M),$$
where $\chi(M)$ is the Euler-Poincar\'e characteristic of $M$.

\medskip

Here we are interested in the special case when $M$ is $\R{2m}$ with the Euclidean metric $g_{\R{2m}}$. In this case we simply have $P^{2m}_{g_{\R{2m}}}=(-\Delta)^m$ and $Q^{2m}_{g_{\R{2m}}}\equiv 0$.
We consider solutions to the equation

\begin{equation}\label{eq0}
(-\Delta)^m u=(2m-1)!e^{2mu} \quad \textrm{on }\R{2m},
\end{equation}
satisfying $\int_{\R{2m}}e^{2mu}dx<\infty.$
From the above remarks and \eqref{gauss} in particular, it follows that \eqref{eq0} has the following geometric meaning: if $u$ solves \eqref{eq0}, then the conformal metric $g:=e^{2u}g_{\R{2m}}$ has $Q$-curvature $Q_g^{2m}\equiv (2m-1)!$. As we shall see, every solution to \eqref{eq0} with $e^{2mu}\in L^1_{\loc}(\R{2m})$ is smooth (Corollary \ref{smooth}).

Given such a solution $u$, define the auxiliary function
\begin{equation}\label{eqv}
v(x):= \frac{(2m-1)!}{\gamma_m}\int_{\R{2m}}\log\bigg(\frac{|y|}{|x-y|}\bigg) e^{2m u(y)}dy,
\end{equation}
where $\gamma_m$ is defined by the following property: $(-\Delta)^m\big(\frac{1}{\gamma_m}\log\frac{1}{|x|}\big)=\delta_0$ in $\R{2m}$, see Proposition \ref{fund} below. Then
$(-\Delta)^m v=(2m-1)!e^{2mu}.$ We prove

\begin{trm}\label{clas1} Let $u$ be a solution of \eqref{eq0} with
\begin{equation}\label{area}
\alpha:=\frac{1}{|S^{2m}|}\int_{\R{2m}} e^{2mu(x)}dx<+\infty.
\end{equation}
Then
\begin{equation}\label{repr}
u(x)=v(x)+p(x),
\end{equation}
where $p$ is a polynomial of even degree at most $2m-2$, $v$ is as in \eqref{eqv} and
\begin{eqnarray*}
\sup_{x\in \R{2m}} p(x)&<& +\infty,\\
\lim_{|x|\to\infty}\Delta^j v(x)&=&0,\quad j=1,\ldots, m-1,\\
v(x)&=&-2\alpha\log|x|+o(\log|x|),\textrm{ as }|x|\to+\infty.
\end{eqnarray*}
\end{trm}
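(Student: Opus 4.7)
The plan is to set $p:=u-v$ and argue that $p$ is a polynomial with the stated properties. Since $(-\Delta)^m v=(2m-1)!e^{2mu}$ by the defining property of $\gamma_m$ and differentiation under the integral, we have $(-\Delta)^m p=0$, so $p$ is polyharmonic of order $m$ on $\R{2m}$. The proof then has two main strands: establishing the sharp asymptotics for $v$ and $\Delta^j v$, and upgrading the polyharmonic function $p$ to a polynomial of controlled degree using the integrability hypothesis $\alpha<\infty$.

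For the asymptotic of $v$ I would split $\R{2m}=A_1\cup A_2\cup A_3$ with $A_1=\{|y|<|x|/2\}$, $A_2=\{|x-y|<|x|/2\}$ and $A_3$ the complement. On $A_1$ one has $\log(|y|/|x-y|)=\log|y|-\log|x|+o(1)$ uniformly, so dominated convergence yields a main contribution equivalent to $-\log|x|\int_{A_1}e^{2mu}dy$ up to an $o(\log|x|)$ error; on $A_3$ the kernel is bounded on most of the region and the tail is controlled by $\int e^{2mu}dy<\infty$; on $A_2$ the logarithm is locally integrable and $e^{2mu}dy$ is absolutely continuous, yielding an $o(\log|x|)$ contribution. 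Once the constants are normalized via $\gamma_m$ one obtains $v(x)=-2\alpha\log|x|+o(\log|x|)$. For the iterated Laplacians I would use the explicit identity $\Delta^j_x\log|x-y|=c_{j,m}|x-y|^{-2j}$ for $1\le j\le m-1$, so that $\Delta^j v(x)$ is, up to constants, $\int|x-y|^{-2j}e^{2mu(y)}dy$; an analogous splitting (with dominated convergence on $A_1\cup A_3$ and equi-integrability on $A_2$) forces $\Delta^j v(x)\to 0$.

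The main obstacle is the polynomial step. The plan is to first derive an upper bound of the form $p(x)\le C(1+\log|x|)$: given $v(x)\sim -2\alpha\log|x|$ and $\int e^{2m(v+p)}dx<\infty$, the positive part of $p$ cannot be too large on a set of positive density, and a sub-mean-value inequality for polyharmonic functions (or a Pizzetti-type expansion, plus ellipticity estimates on balls of radius $|x|/2$) promotes this into a pointwise one-sided bound. A Liouville-type theorem for polyharmonic functions with one-sided polynomial growth then forces $p$ to be a polynomial, while $\Delta^m p=0$ caps its degree at $2m-1$. Finally, $\sup_{\R{2m}} p<\infty$ must hold: otherwise $p$ would grow super-polynomially along some ray and destroy $e^{2mu}\in L^1(\R{2m})$, since $e^{2mv}$ decays only polynomially. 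A polynomial on $\R{2m}$ bounded above has nonpositive top homogeneous part, and no odd-degree homogeneous polynomial is nonpositive everywhere, so $\deg p$ is forced to be even and hence at most $2m-2$.
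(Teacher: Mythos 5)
The central gap is in your treatment of the near-singularity region $A_2=\{|x-y|<|x|/2\}$ (and of the small ball around $x$ for $\Delta^j v$). You dismiss these contributions as $o(\log|x|)$, resp.\ $o(1)$, because ``the logarithm is locally integrable and $e^{2mu}dy$ is absolutely continuous''; but absolute continuity of a single $L^1$ density gives no control, \emph{uniform as $|x|\to\infty$}, of $\int_{B_\tau(x)}\log\frac{1}{|x-y|}e^{2mu(y)}dy$: a priori the volume $e^{2mu}dy$ could concentrate on tiny balls near $x$ (no upper bound on $u$ near infinity is available at this stage), making this integral arbitrarily large. Ruling out this concentration is precisely the hard part of the theorem. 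The paper does it in Lemma \ref{Deltav} (Steps 2--3): it first proves $\sup p<\infty$, then decomposes $v=z+h$ on $B_4(x)$ with $z$ solving a Navier problem, applies the Br\'ezis--Merle-type estimate (Theorem \ref{a2m}) to get a uniform bound on $\int_{B_4(x)}e^{2mp'|z|}$, and controls the polyharmonic remainder $h$ from above via boundary bounds on $(-\Delta)^i v$ and a sign property of the Green function (Lemma \ref{segnogreen}); only then does one obtain the uniform local $L^{p'}$ bound \eqref{2mp} on $e^{2mu}$ that would make your H\"older/equi-integrability step legitimate. Note also the forced logical order: lower bound on $v$ first, then polynomiality and $\sup p<\infty$, and only afterwards the upper bound on $v$ and the decay of $\Delta^j v$ --- your plan runs the asymptotics before the polynomial step, which cannot work for the upper bounds.

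There are also two errors in the polynomial step. First, ``$\Delta^m p=0$ caps its degree at $2m-1$'' is false: harmonic, hence $m$-polyharmonic, polynomials exist in every degree. The bound $\deg p\leq 2m-2$ must come from the Liouville theorem itself (the paper's Theorem \ref{trmliou2}: Pizzetti gives $\intm_{B_R(x)}p\,dy=\sum_{i=0}^{m-1}c_iR^{2i}\Delta^i p(x)=O(R^{2m-2})$ for fixed $x$, and dividing the derivative estimate for $D^{2m-1}p$ by $R^{2m-1}$ kills it). Second, polyharmonic functions satisfy no sub-mean-value inequality --- Pizzetti's formula involves the uncontrolled quantities $\Delta^i p(x)$ --- so you cannot promote the averaged bound on $p^+$ to the pointwise bound $p(x)\leq C(1+\log|x|)$ as you suggest; the paper avoids this entirely by estimating $|D^{2m-1}p(x)|\leq CR^{-(2m-1)}\intm_{B_R(x)}|p|\,dy$, writing $|p|=2p^+-p$, bounding $\intm_{B_R(x)} p^+\,dy$ by $\frac{1}{2m}\intm_{B_R(x)} e^{2mu}dy+C\log R$ and $\intm_{B_R(x)} p\,dy$ by Pizzetti. (Your claim that an unbounded $p$ ``would grow super-polynomially along some ray'' is also off: the quantitative input is Gorin's theorem, that $\sup_{\partial B_r}p\geq r^s$ eventually for some $s>0$, combined with a gradient bound producing a ball of radius $r^{3-2m}$ on which $u$ stays large.) Your evenness argument via the nonpositive top homogeneous part is correct and matches the paper's conclusion in Lemma \ref{polinf}.
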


It is well known that the function
\begin{equation}\label{standard}
u(x):=\log\frac{2\lambda}{1+\lambda^2|x-x_0|^2}
\end{equation}
solves \eqref{eq0} and \eqref{area} with $\alpha=1$ for any $\lambda>0$, $x_0\in\R{2m}$. We call the functions of the form \eqref{standard} \emph{standard solutions}. They all arise as pull-back under the stereographic projection of metrics on $S^{2m}$ which are round, i.e. conformally diffeomorphic to the standard metric. A. Chang and P. Yang \cite{CY} proved that the round metrics are the only metrics on $S^{2m}$ having $Q$-curvature identically equal to $(2m-1)!$.

In the next theorem we give conditions under which an entire solution of Liouville's equation satisfying \eqref{area} is necessarily a standard solution.

\begin{trm}\label{clas2} Let $u$ be a solution of \eqref{clas2} satisfying \eqref{area}. Then the following are equivalent:
\begin{itemize}
\item[(i)] $u$ is a standard solution,
\item[(ii)] $\lim_{|x|\to\infty}\Delta u(x)=0$
\item[(ii')] $\lim_{|x|\to\infty}\Delta^j u(x)=0$ for $j=1,\ldots,m-1$,
\item[(iii)] $u(x)=o(|x|^2)$ as $|x|\to \infty$,
\item[(iv)] $\deg p=0$, where $p$ is the polynomial in \eqref{repr}.
\item[(v)] $\liminf_{|x|\to +\infty}R_{g_u}> -\infty$, where $g_u=e^{2u}g_{\R{2m}}$.
\item[(vi)] $\pi^*g_u$ can be extended to a Riemannian metric on $S^{2m}$, where $\pi:S^{2m}\to\R{2m}$ is the stereographic projection.
\end{itemize}
Moreover, if $u$ is not a standard solution, there exist $1\leq j\leq m-1$ and a constant $a<0$ such that
\begin{equation}\label{deltaa}
\Delta^j u(x)\to a\quad \textrm{as }|x|\to+\infty.
\end{equation}
\end{trm}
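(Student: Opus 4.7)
The overall strategy is to use the decomposition $u = v + p$ from Theorem~\ref{clas1} and reduce each of the listed conditions to the single algebraic statement $\deg p = 0$ (condition (iv)). The implications (i)~$\Rightarrow$~(ii'),~(iii),~(iv),~(v),~(vi) are direct verifications for the explicit function \eqref{standard}: in particular the conformal formula
\begin{equation*}
R_{g_u} = -e^{-2u}\bigl[2(2m-1)\Delta u + (2m-1)(2m-2)|\nabla u|^2\bigr]
\end{equation*}
yields $R_{g_u} \equiv 2m(2m-1)$, and a computation in the inverted coordinates $y = x/|x|^2$ identifies $\pi^* g_u$ with a constant multiple of the round metric on $S^{2m}$.

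For the converses among the analytic conditions, (ii')~$\Rightarrow$~(ii) is trivial, and (ii)~$\Rightarrow$~(iv) follows by writing $\Delta p = \Delta u - \Delta v$: Theorem~\ref{clas1} gives $\Delta v \to 0$, so $\Delta p$ is a polynomial vanishing at infinity, hence $\Delta p \equiv 0$. Then $p$ is a harmonic polynomial with $\sup p < \infty$, and $\sup p - p \ge 0$ is a non-negative harmonic function on $\R{2m}$, so Liouville's theorem makes $p$ constant. For (iii)~$\Rightarrow$~(iv), the asymptotic $v(x) = O(\log|x|) = o(|x|^2)$ together with $u = o(|x|^2)$ gives $p = o(|x|^2)$, forcing $\deg p \le 1$, and the evenness of $\deg p$ forces $\deg p = 0$. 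The reverse implications (iv)~$\Rightarrow$~(ii'),~(iii) are immediate from Theorem~\ref{clas1}.

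The main obstacle is (iv)~$\Rightarrow$~(i). Once $u = v + c$, $u$ satisfies a Chen-Li type integral equation with the logarithmic asymptotic $u(x) = -2\alpha \log|x| + O(1)$ at infinity, and I would apply the method of moving planes (as in Chen-Li for $m=1$ and its higher-order integral-equation analogues due to Wei-Xu and Lin) to conclude radial symmetry about some point, after which the resulting ODE forces $u$ to have the form \eqref{standard} and $\alpha = 1$. For (v)~$\Rightarrow$~(i) I argue contrapositively: if $\deg p = 2k \ge 2$, the term $(2m-1)(2m-2)|\nabla p|^2$ dominates the bracket above for large $|x|$, so the bracket is eventually positive and of polynomial size; along any ray on which $p(x) \to -\infty$ (which exists since $p$ is non-constant with $\sup p < \infty$), $e^{-2u}$ grows at least exponentially, yielding $R_{g_u} \to -\infty$. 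For (vi)~$\Rightarrow$~(i), a smooth extension of $\pi^* g_u$ to $S^{2m}$ produces a smooth metric with $Q \equiv (2m-1)!$ (by continuity from the complement of the north pole), which by the Chang-Yang classification \cite{CY} must be round, hence $u$ standard.

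For the closing assertion, set $2k := \deg p$, so that $1 \le k \le m-1$; then $\Delta^k p$ is a constant $a$, and Theorem~\ref{clas1} gives $\Delta^k u = \Delta^k v + a \to a$. To identify the sign of $a$, let $p_{2k}$ be the leading homogeneous part of $p$. Since $\sup p < \infty$, we must have $p_{2k}(\omega) \le 0$ for every $\omega \in S^{2m-1}$ (otherwise $p(t\omega) \to +\infty$ along the ray $t\omega$), and $p_{2k} \not\equiv 0$ since $\deg p = 2k$. Using the decomposition $p_{2k} = \sum_{j=0}^k |x|^{2j} h_{2k-2j}$ into harmonic components, $\Delta^k$ annihilates all terms except $|x|^{2k} h_0$, giving $\Delta^k p = \Delta^k p_{2k} = c_{m,k}\, h_0$ with $c_{m,k} = \Delta^k |x|^{2k} > 0$ and $h_0 = |S^{2m-1}|^{-1}\int_{S^{2m-1}} p_{2k}\,d\sigma < 0$, so $a < 0$.
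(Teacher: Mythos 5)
Your proof is correct in substance and follows the same overall architecture as the paper: reduce everything to $\deg p=0$ via the decomposition $u=v+p$ of Theorem \ref{clas1}, and defer (iii) (equivalently (iv)) $\Rightarrow$ (i) to the moving-plane argument of Wei--Xu, exactly as the paper does. Two of your sub-arguments genuinely diverge. For (vi) $\Rightarrow$ (i) you invoke the Chang--Yang classification on the extended metric; the paper instead argues contrapositively that for non-standard $u$ the conformal factor $\rho_1=\frac{e^{2u}}{\rho_0}\circ\pi$ satisfies $\liminf_{\xi\to S}\rho_1(\xi)=0$, because $p\to-\infty$ at least quadratically in some directions, so no extension to a positive-definite (even merely continuous) metric exists. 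The paper's route is more elementary and is deliberate: the introduction states that the Chang--Yang theorem is meant to \emph{follow from} (vi), so using it there would defeat that purpose, and your version additionally needs the extension to be smooth enough to carry a $Q$-curvature. For the sign of $a$ in \eqref{deltaa}, your spherical-harmonic decomposition of the leading part $p_{2k}$, combined with $p_{2k}\le 0$ forced by $\sup p<\infty$, is a clean algebraic argument; the paper instead applies Pizzetti's formula on $\partial B_R$, Jensen's inequality and the lower bound $v\ge-2\alpha\log|x|-C$ to show that $\Delta^jp>0$ would contradict $e^{2mu}\in L^1(\R{2m})$. Both are valid; yours is arguably simpler and also yields $\Delta^kp\neq 0$ for $k=\tfrac{1}{2}\deg p$ directly, which the paper extracts from Theorem \ref{trmliou2}. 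One small imprecision in your (v) $\Rightarrow$ (i): the bracket $\Delta p+(m-1)|\nabla p|^2$ need not be positive along \emph{every} ray on which $p\to-\infty$ (lower-order terms of $p$ can drive $p$ to $-\infty$ where $\nabla p_{2k}$ degenerates); you should pick a direction $\omega$ with $p_{2k}(\omega)<0$, along which the radial derivative alone gives $|\nabla p|^2\geq c|x|^{4k-2}\gg|\Delta p|$ --- the same mechanism ($\deg\Delta p<\deg|\nabla p|^2$) that the paper uses, there phrased as a $\limsup$ over all of $\R{2m}$ combined with $e^{-2u}\ge 1/C$.
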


The $2$-dimensional case ($m=1$) of Theorem \ref{clas2} was treated by W. Chen and C. Lin \cite{CL}, who proved that \emph{every} solution with finite total Gaussian curvature is a standard one. The $4$-dimensional case was treated by C-S. Lin \cite{lin}, with a classification of $u$ in terms of its growth, or of the behaviour of $\Delta u$ at $\infty$. The classification of C-S. Lin in terms of $\Delta u$ was used by F. Robert and M. Struwe \cite{RS} to study the blow-up behaviour of sequences of solutions $u_k$ to
$$
\left\{
\begin{array}{ll}
\Delta^2 u_k=\lambda u_k e^{32 \pi^2 u_k^2} & \textrm{in }\Omega\subset\R{4}\\
u_k=\frac{\partial u_k}{\partial n}=0&\textrm{on } \partial \Omega,
\end{array}
\right.
$$
and by A. Malchiodi \cite{mal} to show a compactness criterion for sequences of solutions $u_k$ to the equation
$$P^4_g u_k +Q^4_k=h_ke^{4u_k},\quad h_k \textrm{ constant}$$
on a closed $4$-manifold.
The same criterion could be used in higher dimension in the proof of an analogous compactness result. This was observed by C. B. Ndiaye \cite{ndi}, who then used a different technique to show compactness. We will discuss this in a forthcoming paper.

In higher dimension ($m>2$), J. Wei and X. Xu \cite{WX} treated a special case of Theorem \ref{clas2}: if $u(x)=o(|x|^2)$ at infinity, then $u$ is always a standard solution. This result is not sufficient to prove compactness.
Moreover, the proof appears to be overly simplified. For instance, in their Lemma 2.2 the argument for showing that $u\leq C$ is not conclusive, and in the crucial Lemma 2.4 they simply refer to \cite{lin} for details.
This latter lemma corresponds to Lemma \ref{Deltav} here and it is the main regularity result, as it implies that $u\leq C$, hence that the volume of the metric $e^{2u}g_{\R{2m}}$ cannot concentrate in small balls. Its generalization is a major issue, because Lin's analysis is focused on the function $\Delta u$, and it  makes use of the Harnack's inequality and of the fact that $\Delta(u-v)\equiv C$. In the general case, Harnack's inequality does not work and there are no uniform bounds for $\Delta^{(m-2)}(u-v)$ (while it is still true that $\Delta^{(m-1)}(u-v)\equiv C$). To overcome this difficulties, we spend a few pages in the following section to study polyharmonic functions. As a reward we obtain a Liouville-type theorem for polyharmonic functions (Theorem \ref{trmliou2}) which allows us to make the proof of \cite{lin} more direct and transparent.

\medskip

The characterization in terms of the scalar curvature at infinity is new and quite interesting, as it shows that non-standard solutions have a geometry essentially different from standard solutions, and it also shows that the $Q$-curvature and the scalar curvature are independent of each other in dimension $4$ and higher. On the other hand, since in dimension $2$ we have $2Q_g=R_g$, (v) is consistent with the result of \cite{CL}.

The characterization in (vi) implies the result of A. Chang and P. Yang \cite{CY} described above, which here follows from the general case.

\medskip

The paper is organized as follows. In Section \ref{afew} we collect some relevant results about polyharmonic functions which will be needed later. Section \ref{main} contains the proof of Theorems \ref{clas1} and \ref{clas2}; at the end of the paper we give examples to show that the hypothesis of Theorem \ref{clas2} are sharp in terms of the growth at infinity and of the degree of $p$. Recently J. C. Wei and D. Ye \cite{WY} proved that already in dimension $4$ there is a great abundance of non-radially symmetric solutions.

\medskip

In the following, the letter $C$ denotes a generic constant, which may change from line to line and even within the same line.

\subsubsection*{Aknowledgments}

I wish to thank my advisor, Prof. M. Struwe for stimulating discussions and for introducing me to this very interesting subject. I also thank my friend D. Saccavino for referring me to the result of Gorin, which we use in Lemma \ref{polinf}.

\section{A few remarks on polyharmonic functions}\label{afew}

We briefly recall some properties of polyharmonic functions, which will be used in the sequel. For the standard elliptic estimates for the Laplace operator, we refer to \cite{GT} or \cite{GM}. The next lemma can be considered a generalized mean value inequality. We give the short proof for the convenience of the reader, and because identity \eqref{+} will be used in the next section.

\begin{lemma}[Pizzetti \cite{Piz}] Let $\Delta^m h=0$ in $B_R(x_0)\subset \R{n}$, for some $m,n$ positive integers. Then
\begin{equation}\label{pizzetti}
\Intm_{B_R(x_0)}h(z)dz =\sum_{i=0}^{m-1}c_iR^{2i}\Delta^i h(x_0),
\end{equation}
where
\begin{equation}\label{coeff}
c_0=1,\quad c_i=\frac{n}{n+2i}\frac{(n-2)!!}{(2i)!!(2i+n-2)!!},  \quad i\geq 1.
\end{equation}
\end{lemma}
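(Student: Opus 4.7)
The plan is to reduce the problem to a one-variable ODE for the spherical mean of $h$. Fix $x_0$ and, for $r\in(0,R)$, set $\bar h(r):=\Intm_{\partial B_r(x_0)} h\,d\sigma$. A direct computation (differentiate under the integral and apply the divergence theorem to $\Delta h$ on $B_r(x_0)$) yields the classical identity $(r^{n-1}\bar h'(r))'=r^{n-1}\overline{\Delta h}(r)$, equivalently
\[
L\bar h(r)=\overline{\Delta h}(r),\qquad L:=\partial_r^2+\frac{n-1}{r}\partial_r.
\]
Iterating $k$ times gives $L^k\bar h=\overline{\Delta^k h}$ for each $k=0,\dots,m$; the polyharmonicity hypothesis then forces $L^m\bar h\equiv 0$ on $(0,R)$.

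Next I would solve this $2m$-th order ODE. A short calculation shows $L(r^{2k})=2k(2k+n-2)r^{2k-2}$, so the functions $1,r^2,r^4,\dots,r^{2(m-1)}$ provide $m$ smooth linearly independent solutions; the remaining $m$ independent solutions behave like $r^{2-n+2k}$ (or involve $\log r$ when $n$ is even) and must be discarded, since $h$ is continuous at $x_0$ and hence $\bar h$ extends smoothly through $r=0$. Thus $\bar h(r)=\sum_{k=0}^{m-1}a_k r^{2k}$ on $[0,R)$.

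To pin down the $a_k$ I would apply $L^j$ termwise and evaluate at $r=0$. By the previous formula $L^j(r^{2k})$ vanishes at $r=0$ unless $j=k$, in which case a short induction yields $L^k(r^{2k})|_{r=0}=(2k)!!\,\frac{(n+2k-2)!!}{(n-2)!!}$. Combined with $\overline{\Delta^k h}(0)=\Delta^k h(x_0)$, this gives $a_k=\frac{(n-2)!!}{(2k)!!(n+2k-2)!!}\Delta^k h(x_0)$. It then remains only to pass from the spherical to the ball average by polar integration,
\[
\Intm_{B_R(x_0)}h(z)\,dz=\frac{n}{R^n}\int_0^R r^{n-1}\bar h(r)\,dr=\sum_{k=0}^{m-1}\frac{n}{n+2k}\,a_k R^{2k},
\]
which upon substitution is precisely \eqref{pizzetti} with the claimed coefficients $c_k$. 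The only delicate point in the argument is the exclusion of the singular branch of solutions to $L^m\bar h=0$, but this is immediate from the smoothness (in fact real-analyticity) of the polyharmonic function $h$ at $x_0$.
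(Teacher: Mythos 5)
Your proof is correct, but it follows a genuinely different route from the paper's. You pass to the spherical mean $\bar h(r)$, observe that it solves the iterated Euler equation $L^m\bar h=0$ with $L=\partial_r^2+\frac{n-1}{r}\partial_r$, discard the singular branch of solutions, and then read off the coefficients by applying $L^k$ at $r=0$; everything, including the explicit values \eqref{coeff}, comes out of a single computation. The paper instead proves the spherical-mean identity by induction on $m$, representing $h(x_0)$ through the Green function $G_r$ of $\Delta^m$ in $B_r$ and integrating by parts (identity \eqref{+}), and only afterwards determines the constants by testing with $h(x)=|x|^{2i}$. Your argument is more elementary and self-contained and gives the coefficients directly; the paper's route has the side benefit that the intermediate identity \eqref{+} is not mere scaffolding, since it is reused later (in the remark yielding \eqref{pizgen}, in Lemma \ref{segnogreen}, and in the proof of Lemma \ref{Deltav}), so with your proof that identity would still need to be established separately. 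Concerning the one delicate point you flag, the cleanest justification avoids classifying the logarithmic branches of $L^m$ altogether: $w:=L^{m-1}\bar h=\overline{\Delta^{m-1}h}$ is continuous at $r=0$ and satisfies $(r^{n-1}w')'=0$, hence $w'=cr^{1-n}$ and boundedness forces $c=0$, so $w\equiv\Delta^{m-1}h(x_0)$; integrating back up one level at a time, the homogeneous singular solution $r^{2-n}$ (or $\log r$) is excluded at each stage by the continuity of $\overline{\Delta^{j}h}$ at the origin, and one lands directly on your polynomial expression for $\bar h$.
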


\begin{proof} We can translate and assume that $x_0=0$. We first prove by induction on $m$ that there are constants $b_0^{(m)},\ldots, b_{m-1}^{(m)}$ such that
\begin{equation}\label{pizze}
\Intm_{\partial B_r}h(z)dS=\sum_{i=0}^{m-1}b_i^{(m)}r^{2i}\Delta^i h(0), \quad 0<r<R, \; B_r:=B_r(0).
\end{equation}
For $m=1$ this reduces to the mean value theorem for harmonic functions. Assume now that the assertion has been proved up to $m-1$, and that $\Delta^m h=0$. Let $G_r$ be the Green function of $\Delta^m$ in $B_r$:
\begin{equation}\label{cond}
\Delta^m G_r=\delta_0\textrm{ in } B_r,\quad G_r=\Delta G_r=\ldots=\Delta^{m-1}G_r=0 \textrm{ on } \partial B_r.
\end{equation}
For simplicity, let us only consider the case $n=2m$. 
Then $G_r(x)=G_1\big(\frac{x}{r}\big)$,
$$G_1(x)=\alpha_0 \log|x|+\alpha_1|x|^2+\ldots+ \alpha_{m-1} |x|^{2m-2},$$
where the constants can be computed inductively starting with $\alpha_0$ up to $\alpha_{m-1}$ in order to satisfy \eqref{cond}. Notice that $G_1$ is radial.
Integrating by parts
\begin{eqnarray}
0&=&\int_{B_r}G_r\Delta^m hdx\nonumber\\
&=&h(0)-\sum_{i=0}^{m-1}\int_{\partial B_r} \frac{\partial \Delta^{m-1-i} G_r}{\partial n}\Delta^{i}h dS\label{+}\\
&=&h(0)-\sum_{i=0}^{m-1}\Intm_{\partial B_r} a_i r^{2i}\Delta^i hdS,\nonumber
\end{eqnarray}
where each $a_i$ depends only on $n$ and $m$. For each term on the right-hand side with $i\geq 1$, we can use the inductive hypothesis
$$r^{2i}\Intm_{\partial B_r}\Delta^i h dS=r^{2i}\sum_{j=0}^{m-i-1}b_j^{(m-1)}r^{2j}\Delta^{j+i}h(0), \quad 0\leq i\leq m-1,$$
and substituting we obtain \eqref{pizze}. To conclude the induction it is enough to multiply \eqref{pizze} by $r^{n-1}$, integrate with respect to $r$ from $0$ to $R$ and divide by $\frac{R^n}{n}$.

To compute the $c_i$'s, we test with the functions $h(x)=r^{2i}:=|x|^{2i}$, $i\geq 1$ (for the case $i=0$ use the function $h(x)\equiv 1$). Since $\Delta r^{2i}=2i(2i+n-2)r^{2i-2}$, we have that $\Delta^k h(0)=0$ for $k\neq i$ and $\Delta^i h(0)=\frac{(2i)!!(2i+n-2)!!}{(n-2)!!}$. Hence Pizzetti's formula reduces to
$$c_i R^{2i} \frac{(2i)!!(2i+n-2)!!}{(n-2)!!}=\Intm_{B_R}r^{2i}dx=\frac{n}{n+2i}R^{2i},$$
whence \eqref{coeff}.
\end{proof}

\begin{rmk}\label{rmkpiz} From \eqref{+}, moreover, for an arbitrary $C^{2m}$-function $u$ it follows that
\begin{equation}\label{pizgen}
\Intm_{B_R(x_0)}u(z)dz=\sum_{i=0}^{m-1}c_iR^{2i}\Delta^i u(x_0)+c_{m}R^{2m}\Delta^{m} u(\xi),
\end{equation}
for some $\xi\in B_R(x_0)$.
\end{rmk}

\begin{prop}\label{c2m} Let $\Delta^{m}h=0$ in $B_{4}\subset\R{n}$. For every $0\leq \alpha<1$, $p\in [1,\infty)$ and $k\geq 0$ there are  constants $C(k,p), C(k,\alpha)$ independent of $h$ such that
\begin{eqnarray*}
\|h\|_{W^{k,p}(B_1)}&\leq &C(k,p) \|h\|_{L^1(B_4)}\\
\|h\|_{C^{k,\alpha}(B_1)}&\leq& C(k,\alpha)  \|h\|_{L^1(B_4)}.
\end{eqnarray*}
\end{prop}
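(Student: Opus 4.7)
The plan is to combine Pizzetti's formula \eqref{pizzetti} with standard interior elliptic regularity for the Laplacian. The key observation is that Pizzetti gives enough degrees of freedom to control the \emph{whole} family of iterated Laplacians $\Delta^i h$, $i\leq m-1$, in $L^\infty$ directly from $\|h\|_{L^1(B_4)}$; once this is in hand, derivative bounds follow by a routine backward bootstrap along the chain $\Delta^{m-1}h \to \Delta^{m-2}h \to \cdots \to h$. Smoothness of $h$ needed in the intermediate steps is automatic since $\Delta^m$ is elliptic.

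For the pointwise step, I fix $x_0 \in B_3$ and choose $m$ distinct radii $0<R_1<\cdots<R_m<1$. Applying \eqref{pizzetti} at each $R_j$ yields the linear system
$$\Intm_{B_{R_j}(x_0)} h(z)\,dz = \sum_{i=0}^{m-1} c_i R_j^{2i}\,\Delta^i h(x_0), \qquad j=1,\dots,m,$$
whose coefficient matrix is $\mathrm{diag}(c_0,\dots,c_{m-1})$ times the Vandermonde matrix in $(R_j^2)$. Since the constants $c_i$ from \eqref{coeff} are nonzero and the $R_j^2$ are distinct, this matrix is invertible, with inverse of norm depending only on $m$ and $n$. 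Bounding each mean value by a multiple of $\|h\|_{L^1(B_4)}$ and solving, I obtain
$$\sup_{x_0\in B_3}\bigl|\Delta^i h(x_0)\bigr| \leq C\,\|h\|_{L^1(B_4)}, \qquad i=0,1,\dots,m-1.$$

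Next comes the bootstrap. Since $\Delta^{m-1}h$ is harmonic in $B_4$, classical interior derivative estimates for harmonic functions give $\|\Delta^{m-1}h\|_{C^j(B_{5/2})}\leq C(j)\,\|\Delta^{m-1}h\|_{L^\infty(B_3)}\leq C\,\|h\|_{L^1(B_4)}$ for every $j\geq 0$. Then $\Delta^{m-2}h$ solves the Poisson equation $\Delta u=\Delta^{m-1}h$ whose right-hand side is controlled in every $C^{j,\alpha}$- and $W^{j,p}$-norm, and with $\|u\|_{L^\infty}$ already bounded by the pointwise step, so interior Schauder and Calder\'on-Zygmund estimates upgrade it to $C^{j+2,\alpha}$ and $W^{j+2,p}$ on a slightly smaller ball. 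Iterating this Poisson-style upgrade $m-1$ times along a nested chain of balls shrinking to $B_1$ yields the stated bounds; the $W^{k,p}$ part for $p=1$ (not covered by Calder\'on-Zygmund) comes for free from the $C^k$ bounds, since $B_1$ has finite measure.

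The only non-routine ingredient is the Vandermonde inversion in the pointwise step, and even there the difficulty is purely bookkeeping: once the radii $R_j$ are fixed (say $R_j = j/(m+1)$), the inverse matrix is an explicit object depending only on $m$ and $n$, and its norm is absorbed into the constant $C$. Everything else is standard elliptic theory and scales naturally through the bootstrap.
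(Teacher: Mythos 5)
Your proof is correct, and its key step is genuinely different from the paper's. The paper also reduces everything to an $L^\infty$ bound on the iterated Laplacians $\Delta^i h$ followed by a Poisson-equation bootstrap, but its Step 1 is a downward induction on $j$ that combines the Green's function of the Laplacian, an average of Pizzetti's identity over a range of radii, and an integration by parts against a radial cutoff $\varphi$ to convert boundary integrals of $\partial_\nu\Delta^{i-1}h$ into volume integrals of $|h|$. Your multi-radius argument replaces all of this with linear algebra: evaluating \eqref{pizzetti} at $m$ distinct radii produces a system whose matrix is $\mathrm{diag}(c_0,\dots,c_{m-1})$ times a Vandermonde matrix in the $R_j^2$, and since the $c_i$ in \eqref{coeff} are positive and the $R_j^2$ distinct, inversion yields $\sup_{B_3}|\Delta^i h|\leq C\|h\|_{L^1(B_4)}$ for all $i\leq m-1$ simultaneously, with no induction. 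This is shorter and arguably more transparent; what the paper's route buys in exchange is a self-contained treatment that never needs to solve a linear system and whose intermediate identities (e.g.\ the averaged Green's representation) are reused elsewhere. Your bootstrap also differs mildly in packaging: you invoke interior Schauder and Calder\'on--Zygmund estimates using the already-established $L^\infty$ bound on each $\Delta^{\ell-j}h$ as the lower-order input, whereas the paper's Lemma \ref{wkp} handles the lower-order term by splitting off a harmonic part and using the Poisson kernel; both are standard and correct, and your remark that the $p=1$ case follows from the $C^k$ bounds is the right way to cover the endpoint.
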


The proof of Proposition \ref{c2m} is given in the appendix.
As a consequence of Proposition \ref{c2m} and Pizzetti's formula we have the following Liouville-type theorem, compare \cite{ARS}.

\begin{trm}\label{trmliou} Consider $h:\R{n}\to \R{}$ with $\Delta^m h=0$ and $h(x)\leq C(1+|x|^\ell)$, for some $\ell\geq 2m-2$. Then $h(x)$ is a polynomial of degree at most $\ell$. 
\end{trm}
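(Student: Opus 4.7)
The plan is to combine Pizzetti's formula with the interior estimate of Proposition \ref{c2m} to force every derivative of $h$ of order strictly greater than $\ell$ to vanish identically. The hard part will be controlling $\|h\|_{L^1}$ on large balls: we have only a one-sided upper bound on $h$, but the crucial observation will be that the right-hand side of Pizzetti's identity is a polynomial in the radius of degree at most $2m-2\leq\ell$, and this will supply the matching lower bound on the mean of $h$.

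First, applying Pizzetti's formula to $h$ on $B_{4R}(0)$ yields
$$\Intm_{B_{4R}(0)} h(z)\,dz \,=\, \sum_{i=0}^{m-1} c_i (4R)^{2i} \Delta^i h(0),$$
which, viewed as a polynomial in $R$ of degree at most $2m-2$, is bounded in absolute value by $C_h(1+R^\ell)$, where $C_h$ depends only on the finitely many fixed numbers $\Delta^i h(0)$ and we use $\ell\geq 2m-2$. Multiplying by $|B_{4R}|$ gives $\bigl|\int_{B_{4R}(0)} h\,dz\bigr|\leq C_h(1+R^{n+\ell})$. From the hypothesis $h(x)\leq C(1+|x|^\ell)$ we also have $h^+(x)\leq C(1+|x|^\ell)$ and hence $\int_{B_{4R}(0)} h^+\,dz\leq C(1+R^{n+\ell})$. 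Combining via the identity $h^-=h^+-h$, we obtain
$$\|h\|_{L^1(B_{4R}(0))} \,\leq\, C'(1+R^{n+\ell}).$$

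Finally, rescaling Proposition \ref{c2m} from $B_4$ to $B_{4R}(0)$ through the dilation $y\mapsto Ry$ gives, for every integer $k\geq 0$,
$$\sup_{B_R(0)} |D^k h| \,\leq\, C_k R^{-n-k}\|h\|_{L^1(B_{4R}(0))} \,\leq\, C''_k(R^{-n-k}+R^{\ell-k}).$$
For any integer $k>\ell$ the right-hand side tends to $0$ as $R\to\infty$; since $R\mapsto \sup_{B_R(0)}|D^k h|$ is non-decreasing, it must vanish identically. Thus $D^\alpha h\equiv 0$ on $\R{n}$ for every multi-index $\alpha$ with $|\alpha|>\ell$, i.e.\ $h$ is a polynomial of degree at most $\lfloor\ell\rfloor\leq\ell$, as claimed.
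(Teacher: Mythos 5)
Your argument is correct and is essentially the paper's own proof: both combine Pizzetti's formula (to control the signed mean $\Intm_{B_{4R}}h$ by $O(R^{2m-2})=O(R^{\ell})$) with the one-sided hypothesis on $h^+$ to bound $\|h\|_{L^1(B_{4R})}$, and then invoke the rescaled interior estimate of Proposition \ref{c2m} to kill $D^kh$ for $k>\ell$ as $R\to\infty$. The only differences are cosmetic — you center Pizzetti at the origin and use monotonicity of $\sup_{B_R}|D^kh|$, whereas the paper centers at an arbitrary $x$ — and your handling of non-integer $\ell$ via $\lfloor\ell\rfloor$ is slightly more explicit.
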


\begin{proof}
Thanks to Proposition \ref{c2m}, we have for any $x\in\R{n}$
\begin{equation}\label{lioupr}
|D^{\ell+1}h(x)|\leq\frac{C}{R^{\ell+1}}\Intm_{B_R(x)}|h(y)|dy=-\frac{C}{R^{\ell+1}}\Intm_{B_R(x)}h(y)dy+O(R^{-1}),\quad\textrm{as }R\to\infty.
\end{equation}
On the other hand, Pizzetti's formula implies that
$$\Intm_{B_R(x)}h(y)dy=\sum_{i=0}^{m-1}c_i R^{2i}\Delta^i h(x)=O(R^{2m-2}),$$
and letting $R\to\infty$, we obtain $D^{\ell+1}h=0$.
\end{proof}

A variant of the above theorem, which will be used later is the following

\begin{trm}\label{trmliou2} Consider $h:\R{n}\to \R{}$ with $\Delta^m h=0$ and $h(x)\leq u-v$, where $e^{pu}\in L^1(\R{n})$ for some $p>0$,  $v\in L^1_{\loc}(\R{n})$ and $-v(x)\leq C(\log(1+|x|)+1)$. Then $h$ is a polynomial of degree at most $2m-2$. 
\end{trm}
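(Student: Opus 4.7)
The plan is to follow the blueprint of Theorem \ref{trmliou}, substituting its pointwise polynomial upper bound on $h$ by an integrated logarithmic bound on the positive part $h^+$. From $h\leq u-v$ one has $h^+\leq u^++(-v)^+$. Since $t\leq e^t$ for $t\geq 0$, we get $u^+\leq p^{-1}e^{pu}$ pointwise, so $\int_{\R{n}}u^+\,dy\leq p^{-1}\|e^{pu}\|_{L^1}<+\infty$. Combined with $(-v)^+(y)\leq C(\log(1+|y|)+1)$, integrating on $B_R(x)$ and dividing by $|B_R|$ yields, for every $R\geq 1$ and every $x\in\R{n}$,
\[
\Intm_{B_R(x)}h^+(y)\,dy \leq C+C\log(1+|x|+R),
\]
with $C$ independent of $x$ and $R$.

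Next, Proposition \ref{c2m} applied to the rescaled polyharmonic function $\tilde h(y):=h(x+Ry)$ gives
\[
|D^{2m-1}h(x)|\leq\frac{C}{R^{2m-1}}\Intm_{B_{4R}(x)}|h(y)|\,dy, \qquad R>0.
\]
Writing $|h|=2h^+-h$, the mean of $|h|$ splits into $2\Intm h^+$, bounded by $C\log(1+|x|+R)+C$ from the previous step, and $-\Intm h$, which by Pizzetti's formula equals $-\sum_{i=0}^{m-1}c_i(4R)^{2i}\Delta^i h(x)$, a polynomial in $R$ of degree at most $2m-2$ with coefficients depending on $x$ only; hence for $x$ fixed $|\Intm_{B_{4R}(x)}h|\leq C(x)(1+R^{2m-2})$. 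Combining,
\[
|D^{2m-1}h(x)|\leq\frac{C+C\log(1+|x|+R)+C(x)(1+R^{2m-2})}{R^{2m-1}}\to 0 \quad \text{as }R\to\infty,
\]
so $D^{2m-1}h(x)=0$ for every $x\in\R{n}$, which means $h$ is a polynomial of degree at most $2m-2$.

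The delicate point is that our control on $h$ is one-sided, so we bound $\int h^+$ rather than $\int|h|$ directly. This asymmetry is bridged by the identity $|h|=2h^+-h$ combined with Pizzetti's formula, which pins down $\int h$ as a polynomial in $R$ of exactly the critical degree $2m-2$; dividing by $R^{2m-1}$ then yields the favourable factor $R^{-1}$ that drives the estimate to zero.
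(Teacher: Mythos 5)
Your proof is correct and follows essentially the same route as the paper: the derivative bound from Proposition \ref{c2m}, the splitting $|h|=2h^+-h$ with Pizzetti's formula controlling $\Intm_{B_R(x)}h$ by $O(R^{2m-2})$, and the bound $\Intm_{B_R(x)}h^+\leq \frac{1}{p}\Intm_{B_R(x)}e^{pu}+C\log R+C$ coming from $u^+\leq p^{-1}e^{pu}$ and the logarithmic bound on $-v$. The only cosmetic difference is that the paper presents this as a one-line modification of the proof of Theorem \ref{trmliou}, whereas you spell the whole argument out.
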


\begin{proof} The only thing to change in the proof of Theorem \ref{trmliou}, is the estimate of the term $\frac{2C}{R^{2m-1}}\intm_{B_R(x)} h^+dy$, corresponding to the $O(R^{-1})$ in \eqref{lioupr}.
We have
\begin{eqnarray*}
\Intm_{B_R(x)}h^+dy&\leq& \Intm_{B_R(x)} u^+dy+C\Intm_{B_R(x)}\log(1+|y|)dy+C\\
&\leq&\frac{1}{p}\Intm_{B_R(x)}e^{pu}dy+C\log R+C,
\end{eqnarray*}
and all terms go to $0$ when divided by $R^{2m-1}$ and for $R\to\infty$.
\end{proof}

The following estimate has been obtained by Br\'ezis and Merle \cite{BM} in dimension $2$ and by C.S. Lin \cite{lin} and J. Wei \cite{wei} in dimension $4$. Notice that the constant $\gamma_m$, defined by the relation
$$(-\Delta)^m\bigg(\frac{1}{\gamma_m}\log\frac{1}{|x|}\bigg)=\delta_0,\quad \textrm{in }\R{2m}$$
(see Proposition \ref{fund} in the appendix), plays an important role.

\begin{trm}\label{a2m} Let $f\in L^1(B_R(x_0))$ and let $v$ solve
$$\left\{
\begin{array}{ll}
(-\Delta)^m v=f &\textrm{in } B_R(x_0)\subset \R{2m},\\
v=\Delta v=\ldots=\Delta^{m-1}v=0& \textrm{on }\partial B_R(x_0).
\end{array}
\right.
$$
Then, for any $p\in\Big(0,\frac{\gamma_m}{\|f\|_{L^1(B_R(x_0))}}\Big)$, we have $e^{2m p|v|}\in L^1(B_R(x_0))$ and
$$\int_{B_R(x_0)}e^{2m p |v|}dx\leq C(p)R^{2m},$$
where $\gamma_m$ is given by \eqref{gammam}.
\end{trm}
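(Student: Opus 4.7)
The plan is to follow the classical Brezis--Merle strategy and reduce the exponential integrability of $v$ to a bound on a logarithmic potential. I represent the solution via the Green's function $G_{B_R}(x,y)$ of $(-\Delta)^m$ on $B_R(x_0)$ with Navier boundary conditions, so that $v(x)=\int_{B_R(x_0)} G_{B_R}(x,y) f(y)\,dy$, and then use Jensen's inequality together with a pointwise bound on $G_{B_R}$ to estimate $\int_{B_R(x_0)} e^{2mp|v|}\,dx$.

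The first and most delicate step is a pointwise estimate of the form
$$|G_{B_R}(x,y)|\leq \frac{1}{\gamma_m}\log\frac{CR}{|x-y|}+C', \qquad x,y\in B_R(x_0),$$
with constants depending only on $m$. This says that the leading singularity of $G_{B_R}$ is exactly the fundamental solution $\frac{1}{\gamma_m}\log\frac{1}{|x-y|}$ of Proposition \ref{fund}, while the regular part is bounded on $\overline{B_R(x_0)}\times\overline{B_R(x_0)}$. Because Navier boundary conditions decouple into $m$ successive Dirichlet problems for $-\Delta$, one can build $G_{B_R}$ as an $m$-fold iterate of the Laplacian Green's function $G_{\Delta,R}$ on $B_R$, and establish the estimate inductively from the classical fact that $G_{\Delta,R}$ is pointwise dominated by the fundamental solution of $-\Delta$ on $\R{2m}$. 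I expect this bookkeeping to be the main technical obstacle.

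Once the Green's function bound is in place, the rest is a short computation. Set $d\mu(y):=|f(y)|\,dy/\|f\|_{L^1(B_R(x_0))}$, a probability measure on $B_R(x_0)$. Then
$$2mp|v(x)|\leq \frac{2mp\|f\|_{L^1}}{\gamma_m}\int_{B_R(x_0)}\log\frac{CR}{|x-y|}\,d\mu(y)+C,$$
and Jensen's inequality applied to the convex function $t\mapsto e^t$ yields
$$e^{2mp|v(x)|}\leq C\int_{B_R(x_0)}\left(\frac{CR}{|x-y|}\right)^{2mp\|f\|_{L^1}/\gamma_m}d\mu(y).$$

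To finish, I would integrate over $x\in B_R(x_0)$ and apply Fubini. The inner integral $\int_{B_R(x_0)}|x-y|^{-\alpha}\,dx$ is uniformly bounded in $y\in B_R(x_0)$ precisely when $\alpha<2m$, in which case it is a constant times $R^{2m-\alpha}$. Taking $\alpha=2mp\|f\|_{L^1}/\gamma_m$, the condition $\alpha<2m$ becomes exactly $p<\gamma_m/\|f\|_{L^1}$, and the bound
$$\int_{B_R(x_0)} e^{2mp|v(x)|}\,dx\leq C(p)R^{2m}$$
follows at once. The fact that the threshold in $p$ is sharp, and that the factor $2m$ in the exponent $e^{2mp|v|}$ matches the integrability threshold $\alpha<2m$, is a direct reflection of being in dimension $2m$, where the fundamental solution of $(-\Delta)^m$ is logarithmic with coefficient $1/\gamma_m$.
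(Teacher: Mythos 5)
Your argument is correct in outline and reaches the sharp threshold, but it routes the key step through a pointwise estimate on the Navier Green's function, whereas the paper avoids estimating the Green's function altogether. The paper rescales to $\|f\|_{L^1}=1$, introduces the explicit logarithmic potential $w(x)=\frac{1}{\gamma_m}\int_{B_R}\log\frac{2R}{|x-y|}|f(y)|\,dy$, observes that $(-\Delta)^j w\geq 0$ for all $j$ (by \eqref{eqdelta3} and $|x-y|\leq 2R$ on $B_R\times B_R$), and then applies the maximum principle for Navier problems (Proposition \ref{propmax}) to $z=w\mp v$ to conclude $|v|\leq w$ pointwise; after that, the Jensen--Fubini computation is exactly the one you perform, applied to $w$ instead of to a Green's representation of $v$. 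Your route instead hinges on the bound $0\leq G_{B_R}(x,y)\leq \frac{1}{\gamma_m}\log\frac{CR}{|x-y|}+C'$ with the \emph{sharp} constant $\frac{1}{\gamma_m}$ in front of the logarithm --- any larger constant would shrink the admissible range of $p$ and miss the stated threshold. That bound is true, and your plan (iterate the Dirichlet Green's function of $-\Delta$, each factor dominated by a multiple of $|x-y|^{2-2m}$; the singularity exponents $2m-2,2m-4,\dots$ drop by $2$ at each convolution, the last step being the borderline logarithmic case whose coefficient is produced entirely by the local part of the convolution and hence coincides with that of the fundamental solution of $(-\Delta)^m$) does deliver it; but this is the one piece of your proof that is asserted rather than proved, and it is precisely the bookkeeping the paper's comparison-function device is designed to bypass. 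Both approaches are legitimate: the paper's is shorter and self-contained given Proposition \ref{propmax}, while yours, once the kernel estimate is written out, yields a Green's function bound of independent use.
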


\begin{proof} We can assume $x_0=0$ and, up to rescaling, that $\|f\|_{L^1(B_R)}=1$. Define
$$w(x):=\frac{1}{\gamma_m}\int_{B_R}\log\frac{2R}{|x-y|}|f(y)|dy,\quad x\in\R{2m}.$$
Extend $f$ to be zero outside $B_R(x_0)$; then
$$(-\Delta)^m w=|f|\quad\textrm{in } \R{2m}.$$
We claim that $w\geq |v|$ in $B_R$. Indeed by \eqref{eqdelta3} and from $|x-y|\leq 2R$ for $x,y\in B_R$, we immediately see that
$$(-\Delta)^j w\geq 0,\quad j=0,1,2,\ldots$$
In particular the function $z:=w-v$ satisfies
$$
\left\{
\begin{array}{ll}
(-\Delta)^m z\geq 0 & \textrm{in }B_R\\
(-\Delta)^j z\geq 0 & \textrm{on }\partial B_R\textrm{ for } 0\leq j\leq m-1.
\end{array}
\right.
$$
By Proposition \ref{propmax}, $(-\Delta)^jz\geq 0$ in $B_R$, $0\leq j\leq m-1$ and the case $j=0$ corresponds $w\geq v$. Working also with $-v$ we complete the proof of our claim.

Now it suffices to show that for $p\in (0,\gamma_m)$ we have $\|e^{2mpw}\|_{L^1(B_R)}\leq C(p)R^{2m}.$ By Jensen's inequality we have

\begin{eqnarray*}
\int_{B_R}e^{2mpw}dx&=&\int_{B_R}e^{\frac{2mp}{\gamma_m}\int_{B_R}\log\frac{2R}{|x-y|}|f(y)|dy}dx\\
&\leq&\int_{B_R}\int_{B_R}|f(y)|e^{\frac{2mp}{\gamma_m}\log\frac{2R}{|x-y|}}dydx\\
&=&\int_{B_R}|f(y)|\bigg(\int_{B_R}\bigg(\frac{2R}{|x-y|}\bigg)^\frac{2mp}{\gamma_m}dx\bigg)dy
\end{eqnarray*}
On the other hand
\begin{eqnarray*}
\int_{B_R}\bigg(\frac{2R}{|x-y|}\bigg)^\frac{2mp}{\gamma_m}dx&\leq&
\int_{B_R}\bigg(\frac{2R}{|x|}\bigg)^\frac{2mp}{\gamma_m}dx\\
&=&\omega_{2m}\int_0^R r^{2m-1-\frac{2mp}{\gamma_m}}(2R)^\frac{2mp}{\gamma_m}dr\\
&=&\omega_{2m}\frac{\gamma_m}{2m\gamma_m-2mp}R^{2m}2^\frac{2mp}{\gamma_m}.
\end{eqnarray*}
We then conclude
$$\int_{B_R}e^{2mpw}dx\leq \frac{C(m)}{\gamma_m-p}R^{2m}.$$
\end{proof}

\begin{cor}\label{smooth} Every solution $u$ to \eqref{eq0} with $e^{2mu}\in L^1_{\loc}(\R{2m})$ is smooth. 
\end{cor}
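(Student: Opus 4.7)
The plan is a localized Br\'ezis--Merle style bootstrap. Fix $x_0\in\R{2m}$; since $e^{2mu}\in L^1_{\loc}$, a solution $u$ lies in $L^1_{\loc}$ (so that \eqref{eq0} makes sense distributionally). The first task is to improve the integrability of $e^{2mu}$ near $x_0$ from $L^1$ to some $L^p$ with $p>1$. Choose $R>0$ so small that
$$\|f\|_{L^1(B_R(x_0))}<\gamma_m, \qquad f:=(2m-1)!\,e^{2mu},$$
which is possible because $f\in L^1_{\loc}$. Let $v$ solve the polyharmonic Dirichlet problem on $B_R(x_0)$ with right-hand side $f$ and vanishing iterated Laplacians on the boundary, as in Theorem~\ref{a2m}. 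That theorem then furnishes $e^{2mp|v|}\in L^1(B_R(x_0))$ for some $p>1$.

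Next, set $h:=u-v$. Since $u,v\in L^1_{\loc}(B_R(x_0))$ and $(-\Delta)^m u=(-\Delta)^m v=f$, the distribution $h$ satisfies $(-\Delta)^m h=0$ in $B_R(x_0)$; by interior regularity for the polyharmonic operator, $h$ is smooth (indeed, real analytic), hence bounded on $B_{R/2}(x_0)$. Consequently
$$e^{2mu}=e^{2mh}e^{2mv}\leq C\,e^{2m|v|}\quad\textrm{on }B_{R/2}(x_0),$$
so $e^{2mu}\in L^p(B_{R/2}(x_0))$ with the same $p>1$, and therefore $f\in L^p_{\loc}$.

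Now I would bootstrap. With $f\in L^p_{\loc}$ and $p>1$, standard $L^p$ theory for $(-\Delta)^m$ yields $v\in W^{2m,p}_{\loc}(B_R(x_0))$; since the ambient dimension is $2m$, the Sobolev embedding $W^{2m,p}\hookrightarrow L^\infty$ holds precisely when $p>1$, so $v$ is locally bounded. Adding the smooth $h$, we conclude $u\in L^\infty_{\loc}(\R{2m})$. From here the regularity climb is routine: $f=(2m-1)!e^{2mu}\in L^\infty_{\loc}$ gives $u\in W^{2m,q}_{\loc}\subset C^{2m-1,\alpha}_{\loc}$ for every $q<\infty$, whence $f\in C^{2m-1,\alpha}_{\loc}$, and iterating the Schauder (or $L^p$) estimates for $(-\Delta)^m$ against the smooth nonlinearity $t\mapsto e^{2mt}$ produces $u\in C^\infty$.

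The one genuinely nontrivial step is the passage from $L^1$ to $L^p$ integrability of $e^{2mu}$; this is precisely the purpose of the exponential integrability bound in Theorem~\ref{a2m}, so the work is already done there. The rest is decomposition into a Dirichlet part plus a smooth polyharmonic correction, followed by an entirely standard semilinear bootstrap.
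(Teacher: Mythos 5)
Your proof is correct and follows essentially the same route as the paper's: obtain $e^{2mu}\in L^p$ for some $p>1$ by applying the exponential-integrability bound of Theorem~\ref{a2m} to a Navier problem, absorb the remaining polyharmonic piece (smooth by interior regularity), and then run the standard $L^p$/Sobolev/Schauder bootstrap. The only cosmetic difference is that you achieve the smallness $\|f\|_{L^1}<\gamma_m$ by shrinking the ball around $x_0$, whereas the paper keeps the ball $B_4(x_0)$ fixed and instead splits $f=f_1+f_2$ with $f_1$ small in $L^1$ and $f_2\in L^\infty$.
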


\begin{proof} Given $B_4(x_0)\subset\R{2m}$, write $(2m-1)!e^{2mu}\big|_{B_4(x_0)}=f_1+f_2$ with
$$\|f_1\|_{L^1(B_4(x_0))}<\gamma_m, \quad f_2\in L^\infty(B_4(x_0)),$$
and $u=u_1+u_2+u_3$, with
$$
\left\{
\begin{array}{ll}
(-\Delta)^m u_i=f_i &\textrm{in } B_4(x_0)\\
u_i=\Delta u_i=\ldots=\Delta^{m-1} u_i=0& \textrm{on }\partial B_4(x_0)
\end{array}
\right.
$$
for $i=1,2$, and $\Delta^m u_3=0$. Then, by Theorem \ref{a2m}, $e^{2mu_1}\in L^p(B_4(x_0))$ for some $p>1$, while, by standard elliptic estimates $u_2\in L^\infty (B_4(x_0))$ and $u_3$ is smooth, hence $u_3\in L^\infty(B_3(x_0))$. Then $e^{2mu}\in L^p(B_3(x_0))$. Write now $u\big|_{B_3(x_0)}=v_1+v_2$, where
$$
\left\{
\begin{array}{ll}
(-\Delta)^m v_1=(2m-1)!e^{2mu} &\textrm{in } B_3(x_0)\\
v_1=\Delta v_1=\ldots=\Delta^{m-1} v_1=0& \textrm{on }\partial B_3(x_0)
\end{array}
\right.
$$
and $\Delta^m v_2=0$. Then, by $L^p$-estimates and Sobolev's embedding theorem, $v_1\in W^{2m,p}(B_3(x_0))\hookrightarrow C^{0,\alpha}(B_3(x_0))$ for some $0<\alpha<1$, while $v_2$ is smooth. Then $u\in C^{0,\alpha}(B_2(x_0))$ and with the same procedure of writing $u$ as the sum of a polyharmonic (hence smooth) function plus a function with vanishing Navier boundary condition, we can bootstrap and use Schauder's estimate to prove that $u\in C^\infty(B_1(x_0))$. 
\end{proof}

\section{Proof of Theorems \ref{clas1} and \ref{clas2}}\label{main}

The proof of Theorems \ref{clas1} and \ref{clas2} will be divided into several lemmas. It consists of a careful study of the functions $v$, defined in \eqref{eqv}, and $u-v$. In what follows the generic constant $C$ may depend also on $u$.

\begin{rmk}
In general $v\neq u$, even if $u$ is a standard solution. To see that, rescale $u$ by a factor $r>0$ as follows:
$$\widetilde u(x):=u(rx)+\log r.$$ Then $\widetilde u$ is again a solution, with the same energy. On the other hand the corresponding $\widetilde v$ satisfies
\begin{eqnarray}
\widetilde v(x)&=&\frac{(2m-1)!}{\gamma_m}\int_{\R{2m}}\log\bigg(\frac{|y|}{|x-y|}\bigg) e^{2m u(ry)}r^{2m}dy\nonumber\\
&=&\frac{(2m-1)!}{\gamma_m}\int_{\R{2m}}\log\bigg(\frac{|y'|}{|rx-y'|}\bigg) e^{2m u(y')}dy'=v(rx).\label{scalev}
\end{eqnarray}
That shows that after rescaling, $u-v$ changes by a contant.
\end{rmk}

\begin{lemma}\label{lemmabeta} Let $u$ be a solution of \eqref{eq0}, \eqref{area}.
Then, for $|x|\geq 4$,
\begin{equation}\label{eqlog}
v(x)\geq -2\alpha \log|x|+C.
\end{equation}
\end{lemma}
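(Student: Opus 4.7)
The plan is to rewrite the target bound in a scale-invariant form and then estimate the integrand pointwise. Since $\gamma_m$ is defined so that $\frac{(2m-1)!}{\gamma_m}\int_{\R{2m}}e^{2mu(y)}dy=\frac{(2m-1)!|S^{2m}|}{\gamma_m}\alpha=2\alpha$, adding $2\alpha\log|x|$ to both sides of \eqref{eqlog} absorbs the $\log|x|$ into the kernel, and the inequality becomes
\[
v(x)+2\alpha\log|x|=\frac{(2m-1)!}{\gamma_m}\int_{\R{2m}}\log\!\Bigl(\frac{|y|\,|x|}{|x-y|}\Bigr)e^{2mu(y)}dy\geq C.
\]
So I only need a uniform-in-$x$ lower bound on this single integral for $|x|\geq 4$.

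The key step is a pointwise lower bound on the integrand. I would split into two cases. For $|y|\geq 1$, using $|x|\geq 1$ and $|y|\geq 1$, one has $|x-y|\leq |x|+|y|\leq 2|x||y|$, hence
\[
\log\!\Bigl(\frac{|y|\,|x|}{|x-y|}\Bigr)\geq -\log 2.
\]
For $|y|<1$, the estimate $|x-y|\leq |x|+1\leq 2|x|$ yields $\frac{|y||x|}{|x-y|}\geq \frac{|y|}{2}$, and therefore
\[
\log\!\Bigl(\frac{|y|\,|x|}{|x-y|}\Bigr)\geq \log|y|-\log 2.
\]
Both estimates are uniform in $x$ once $|x|\geq 4$.

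Plugging these into the integral and using $\int_{\R{2m}}e^{2mu}dy=\alpha|S^{2m}|$, I obtain
\[
\int_{\R{2m}}\log\!\Bigl(\frac{|y|\,|x|}{|x-y|}\Bigr)e^{2mu(y)}dy\geq \int_{|y|<1}\log|y|\,e^{2mu(y)}dy-(\log 2)\alpha|S^{2m}|.
\]
The remaining ingredient is that the first term on the right is a (negative but) finite constant: by Corollary \ref{smooth} the solution $u$ is smooth, so $e^{2mu}$ is bounded on the unit ball, and $\log|y|$ is locally integrable in $\R{2m}$. Multiplying by $\frac{(2m-1)!}{\gamma_m}$ finishes the proof.

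I do not expect a genuine obstacle here; the only delicate point is the local-integrability issue near $y=0$, which is why one needs Corollary \ref{smooth} (or just the $L^1_{\loc}$ bound on $e^{2mu}$) to ensure the constant $C$, which may depend on $u$, is finite.
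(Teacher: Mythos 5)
Your proof is correct and follows essentially the same strategy as the paper: pointwise lower bounds on the logarithmic kernel via $|x-y|\leq C|x||y|$ away from the origin, plus smoothness of $u$ and local integrability of $\log|y|$ to control the contribution near $y=0$. The only difference is cosmetic: by absorbing $2\alpha\log|x|$ into the kernel first you get away with two regions instead of the paper's three ($B_{|x|/2}(x)$, its complement, and $B_2(0)$), which is a mild streamlining.
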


\begin{proof} The proof is similar to that in dimension $4$, compare \cite{lin}. Fix  $x$ with $|x|\geq 4$, and decompose $\R{2m}=A_1\cup A_2\cup B_2$, where $B_2=B_2(0)$ and
$$A_1:=B_{|x|/2}(x), \quad A_2:= \R{2m}\backslash (A_1\cup B_2).$$
For $y\in A_1$ we have
$$|y|\geq |x|-|x-y|\geq \frac{|x|}{2}\geq |x-y|,\quad \log \frac{|y|}{|x-y|}\geq 0,$$
hence
\begin{equation}\label{A1}
\int_{A_1}\log\frac{|y|}{|x-y|}e^{2mu(y)}dy\geq 0.
\end{equation}
For $y\in A_2$, since $|x|,|y|\geq 2$, we have
$$|x-y|\leq |x|+|y|\leq |x||y|,\quad \log\frac{|y|}{|x-y|}\geq \log\frac{1}{|x|},$$
hence
\begin{equation}\label{A2}
\int_{A_2}\log\frac{|y|}{|x-y|}e^{2mu(y)}dy\geq -\log|x|\int_{A_2}e^{2mu(y)}dy.
\end{equation}
For $y\in B_2$, $\log|x-y|\leq \log |x|+C$ and, since $u$ is smooth, we find
\begin{eqnarray}
\int_{B_2}\log\frac{|y|}{|x-y|}e^{2mu(y)}dy&\geq&\int_{B_2}\log|y|e^{2mu(y)}dy-\log|x|\int_{B_2}e^{2mu}dy\nonumber\\
&&-C\int_{B_2}e^{2mu}dy\nonumber\\
&\geq& -\log|x|\int_{B_2}e^{2mu}dy+C.\label{B2}
\end{eqnarray}
Putting together \eqref{A1}, \eqref{A2} and \eqref{B2} and observing that $\log\frac{1}{|x|}<0$, we conclude that
\begin{eqnarray*}
v(x)&\geq& \frac{(2m-1)!}{\gamma_m}\int_{A_2\cup B_2}\log\bigg(\frac{|y|}{|x-y|}\bigg) e^{2m u(y)}dy\\
&\geq&-\frac{(2m-1)!}{\gamma_m}\log|x|  \int_{A_2\cup B_2} e^{2mu}dy+C\\
&\geq&-\frac{(2m-1)!|S^{2m}|}{\gamma_m}\alpha \log|x| +C.
\end{eqnarray*}
Finally, observing that $(2m-2)!!=2^{m-1}(m-1)!$, we infer
$$\frac{(2m-1)!|S^{2m}|}{\gamma_m}=\frac{(2m-1)! 2(2\pi)^m (2m-2)!!}{(2m-1)!!2^{3m-2}[(m-1)!]^2\pi^m}=2.$$
\end{proof}

\begin{lemma}\label{Deltapol} Let $u$ be a solution of \eqref{eq0} and \eqref{area}, with $m\geq 2$.
Then $u=v+p$, where $p$ is a polynomial of degree at most $2m-2$. Moreover
\begin{eqnarray*}
\Delta^{j} u(x)&=&  \Delta^{j}v(x) + p_j\\
&=&(-1)^{j} \frac{2^{2j}(j-1)!(m-1)!}{(m-j-1)!|S^{2m}|}\int_{\R{2m}}\frac{e^{2mu(y)}}{|x-y|^{2j}}dy +p_j,
\end{eqnarray*}
where $p_j$ is a polynomial of degree at most $2(m-1-j)$.
\end{lemma}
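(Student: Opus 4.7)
The plan is to prove the lemma in two independent parts: first, to identify $u-v$ as a polyharmonic function and invoke a Liouville-type theorem to conclude it is a polynomial; second, to compute $\Delta^j v$ explicitly by differentiating under the integral sign.

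First I would set $h := u - v$. Since $v$ is the convolution of the fundamental solution (up to the normalizing constant $\gamma_m$) with $(2m-1)! e^{2mu}$, one has $(-\Delta)^m v = (2m-1)! e^{2mu}$ in the distributional sense on $\R{2m}$, and by assumption the same is true of $u$. Hence $(-\Delta)^m h = 0$, so $h$ is polyharmonic (and smooth by elliptic regularity). To apply Theorem \ref{trmliou2} to $h$, I need to verify: $e^{2mu} \in L^1(\R{2m})$ (true by \eqref{area}); $v \in L^1_{\loc}(\R{2m})$ (true because $v$ is a convolution of a locally integrable kernel with an $L^1$ function); and $-v(x) \leq C(\log(1+|x|)+1)$, which is exactly the content of Lemma \ref{lemmabeta} for $|x| \geq 4$ (and holds on $B_4$ by smoothness of $u$ and the continuity of $v$ there). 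Theorem \ref{trmliou2} then yields that $h = u - v$ is a polynomial $p$ of degree at most $2m-2$.

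Next I would compute $\Delta^j v$ by induction on $j$, for $1 \leq j \leq m-1$. Writing
$$v(x) = \frac{(2m-1)!}{\gamma_m}\int_{\R{2m}}\log|y|\, e^{2mu(y)}dy - \frac{(2m-1)!}{\gamma_m}\int_{\R{2m}}\log|x-y|\, e^{2mu(y)}dy,$$
differentiation under the integral sign is legitimate because for each $1 \leq j \leq m-1$ the kernel $|x-y|^{-2j}$ is locally integrable in $\R{2m}$ (as $2j < 2m$) and uniformly bounded outside a neighborhood of $x$, so together with $e^{2mu} \in L^1$ the integral converges and dominates the relevant difference quotients locally uniformly. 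Using $\Delta_x \log|x-y| = (2m-2)/|x-y|^2$ and
$$\Delta_x |x-y|^{-2j} = (-2j)(-2j + 2m - 2)\,|x-y|^{-2j-2} = -4j(m-1-j)\,|x-y|^{-2j-2},$$
one finds the recursion $c_{j+1} = 4j(m-1-j)\,c_j$ for the constant appearing in $\Delta^j v = (-1)^j c_j \int |x-y|^{-2j}e^{2mu(y)}dy$. The base case $j=1$ reads $c_1 = (2m-1)!(2m-2)/\gamma_m = 4(m-1)/|S^{2m}|$, where the last equality comes from the identity $(2m-1)!|S^{2m}|/\gamma_m = 2$ established at the end of the proof of Lemma \ref{lemmabeta}. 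A short check confirms that the closed form $c_j = 2^{2j}(j-1)!(m-1)!/((m-j-1)!|S^{2m}|)$ satisfies both the recursion and the base case.

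Finally, applying $\Delta^j$ to the decomposition $u = v + p$ gives $\Delta^j u = \Delta^j v + p_j$, where $p_j := \Delta^j p$ is a polynomial of degree at most $\deg p - 2j \leq 2(m-1-j)$, as claimed. The step I expect to require the most care is the justification of differentiation under the integral sign on the singular kernel, since one must confirm uniform local domination; but the integrability thresholds $2j < 2m$ for $j \leq m-1$ and $e^{2mu} \in L^1$ together handle the near-diagonal and far-field regions respectively.
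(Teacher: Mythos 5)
Your proposal is correct and follows essentially the same route as the paper: set $p=u-v$, observe $\Delta^m p=0$, use the lower bound on $v$ from Lemma \ref{lemmabeta} to verify the hypotheses of Theorem \ref{trmliou2} and conclude $\deg p\leq 2m-2$, then compute $\Delta^j v$ from \eqref{eqdelta3} and the value of $\gamma_m$ (your recursion and base case reproduce exactly the constant $2^{2j}(j-1)!(m-1)!/((m-j-1)!|S^{2m}|)$ via $(2m-1)!|S^{2m}|/\gamma_m=2$). The paper states these steps only in outline, so your added justification of the differentiation under the integral sign is a welcome but unsurprising elaboration.
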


\begin{proof} Let $p:=u-v$. Then $\Delta^m p=0$. By Lemma \ref{lemmabeta} we have
$$p(x)\leq u(x)+2\alpha \log|x| +C,$$
and Theorem \ref{trmliou2} implies that $p$ is a polynomial of degree at most $2m-2$. To compute $\Delta^{j} v$, one can use \eqref{eqdelta3} and the definition of $\gamma_m$.
\end{proof}

\begin{lemma}\label{polinf} Let $p$ be the polynomial of Lemma \ref{Deltapol}. Then
$$\sup_{x\in\R{2m}}p(x)<+\infty.$$
In particular $\deg p$ is even.
\end{lemma}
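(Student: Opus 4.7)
The strategy is proof by contradiction: suppose $\sup_{\R{2m}} p = +\infty$. Combining Lemma \ref{Deltapol} (which gives $u = v + p$) with Lemma \ref{lemmabeta} (which yields $v(x) \geq -2\alpha \log|x| + C$ for $|x| \geq 4$), one immediately obtains
$$e^{2mu(x)} \geq C|x|^{-4m\alpha}e^{2mp(x)}, \quad |x| \geq 4,$$
so that the hypothesis $\int_{\R{2m}} e^{2mu}\,dx < +\infty$ forces
$$\int_{|x| \geq 4} |x|^{-4m\alpha}\, e^{2mp(x)}\, dx < +\infty. \quad (\dagger)$$

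The heart of the argument is to show that $(\dagger)$ is incompatible with $p$ being an unbounded-above polynomial of degree at most $2m-2$. This is exactly where the result of Gorin, referenced in the acknowledgements, enters: it provides a quantitative lower bound on the measure of the super-level sets $\{p > t\}$ of a polynomial, in terms of $t$ and the degree. Using such a bound I would produce, for each sufficiently large $t$, a set $U_t \subset \{p > t\}$ contained in some annulus $\{|x| \sim R_t\}$ with $\vol(U_t) \geq R_t^{2m}\, t^{-a}$ for an exponent $a$ depending only on $m$; the estimate
$$\int_{U_t} |x|^{-4m\alpha}\, e^{2mp(x)}\, dx \;\geq\; c\, e^{2mt}\, R_t^{2m - 4m\alpha}\, t^{-a} \;\longrightarrow\; +\infty$$
then contradicts $(\dagger)$ once $t$ is taken large enough.

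The main obstacle is the case in which the leading homogeneous part $p_d$ of $p$ is nonpositive on the entire unit sphere while $p$ is still unbounded above, the growth being driven by lower-order terms in a thin region collapsing onto the zero locus of $p_d$; here one cannot simply integrate over a fixed solid cone, and the full strength of Gorin's measure estimate for polynomial level sets is needed in order to keep track of how much mass such a degenerate super-level set carries at a given scale. Once $\sup p < +\infty$ has been established, the parity assertion is immediate: if $\deg p$ were odd, then restricting $p$ to a line along a direction where its leading homogeneous part does not vanish would give an odd-degree polynomial in one real variable, which is unbounded above, a contradiction.
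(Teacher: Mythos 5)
Your overall strategy is the paper's: assume $\sup_{\R{2m}}p=+\infty$, combine $u=v+p$ with the lower bound $v(x)\ge -2\alpha\log|x|+C$ from Lemma \ref{lemmabeta}, and contradict $\int_{\R{2m}}e^{2mu}\,dx<\infty$ by exhibiting a region where $e^{2mp}$ is huge while the volume decays only polynomially. The gap is in the pivotal quantitative input. You invoke a ``measure estimate of Gorin for the super-level sets $\{p>t\}$'' giving $\vol(U_t)\ge R_t^{2m}t^{-a}$, and you yourself single out the degenerate case (leading homogeneous part nonpositive on the sphere, growth carried by lower-order terms on a thin set collapsing onto its zero locus) as the main obstacle, resolved only by appealing to ``the full strength'' of that estimate. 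But Gorin's Theorem 3.1, as cited in the paper, is not a measure estimate for level sets: it says that for a polynomial unbounded on $\R{n}$ the quantity $\sup_{\partial B_r}p$ (in the original, $\inf_{\partial B_r}|p|$) eventually dominates $r^s$ for some $s>0$. The super-level-set volume bound you need is neither stated in Gorin nor proved in your argument, so the heart of the proof is left as an unjustified black box.

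The paper closes exactly this hole with an elementary device you should adopt. Since $\deg p\le 2m-2$, one has $|\nabla p(x)|\le C|x|^{2m-3}$; hence if $x_r\in\partial B_r$ realizes $f(r):=\sup_{\partial B_r}p$, then $p\ge f(r)-C$ on the ball $B_{r^{3-2m}}(x_r)$. Gorin gives $f(r)/r^s\to\infty$, and together with Lemma \ref{lemmabeta} this yields $u\ge r^s$ on $\partial B_r\cap B_{r^{3-2m}}(x_r)$, a set whose surface measure is at least $c\,r^{-(2m-3)(2m-1)}$. Integrating $e^{2mu}\ge e^{2mr^s}$ over these sets and then in $r$ via Fubini gives $\int_{\R{2m}}e^{2mu}\,dx=+\infty$, since the exponential beats any polynomial loss of volume. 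This replaces your unproved measure estimate by the trivial gradient bound and disposes of the degenerate case automatically: one only needs a polynomially small ball around the maximizing point, not control of the whole super-level set. Your concluding parity argument (an odd-degree polynomial restricted to a suitable line is unbounded above) is fine.
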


\begin{proof} Define
$$f(r):=\sup_{\partial B_r} p.$$
If $\sup_{\R{2m}} p=+\infty$, there exists $s>0$ such that
\begin{equation}\label{fr}
\lim_{r\to+\infty}\frac{f(r)}{r^s}=+\infty,
\end{equation}
see \cite[Theorem 3.1]{gor}.\footnote{The statement of Theorem 3.1 in \cite{gor} is about $\mu(r):=\inf_{\partial B_r} |p|$, but the proof works in our case too.}
Moreover $|\nabla p(x)|\leq C|x|^{2m-3}$ hence, also taking into account Lemma \ref{lemmabeta}, there is $R>0$ such that for every $r\geq R$, we can find $x_r$ with $|x_r|=r$ such that
$$u(y)=v(y)+p(y)\geq r^s\quad \textrm{for } |y-x_r|\leq\frac{1}{r^{2m-3}}.$$
Then, using Fubini's theorem,
\begin{eqnarray*}
\int_{\R{2m}}e^{2mu}dx&\geq& \int_R^{+\infty}\int_{\partial B_r(0)\cap B_{r^{3-2m}}(x_r)}e^{2mr^s}d\sigma dr\\
&\geq& C\int_R^{+\infty}\frac{\exp(2mr^s)}{r^{(2m-3)(2m-1)}}dr=+\infty,
\end{eqnarray*}
contradicting the hypothesis $e^{2mu}\in L^1(\R{2m})$.
\end{proof}

The following lemma will be used in the proof of Lemma \ref{Deltav}.

\begin{lemma}\label{segnogreen}
Let $G=G(|x|)$ be the Green's function for $\Delta^m$ in $B_1\subset\R{n}$ for $n$, $m$ given positive integers. Then there are constants $c_i$ depending on $m$ and $n$ such that for $|x|=1$, and $0\leq i\leq m-1,$
$$(-1)^i\frac{\partial \Delta^{m-1-i}G(x)}{\partial r}=c_i>0.$$
\end{lemma}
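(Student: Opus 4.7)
The plan is to iterate the maximum principle and Hopf's boundary lemma on the sequence $h_j := \Delta^{m-j}G$ for $j=1,\ldots,m$. The boundary conditions defining $G$ give $h_j|_{\partial B_1} = 0$, while $\Delta^m G = \delta_0$ yields $\Delta h_1 = \delta_0$ in $B_1$ and $\Delta h_j = h_{j-1}$ for $j\geq 2$. Radial symmetry of $G$ is inherited by each $h_j$, so $\partial_r h_j|_{r=1}$ is a constant on $\partial B_1$; after the substitution $j = i+1$, the lemma asserts that $(-1)^{j-1}\partial_r h_j|_{r=1}$ is a positive constant for $j=1,\ldots,m$.

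The core of the proof is the inductive claim that
$$(-1)^{j} h_j(x) > 0 \quad \text{for all } x \in B_1\setminus\{0\}, \ j=1,\ldots,m.$$
For $j=1$, $h_1$ is the classical Dirichlet Green's function of the Laplacian: the explicit formulas $h_1 = \frac{1}{2\pi}\log|x|$ in $n=2$ and $h_1 = \frac{|x|^{2-n}-1}{(2-n)\omega_{n-1}}$ in $n\geq 3$ give $h_1 < 0$ on $B_1\setminus\{0\}$. For the inductive step, assume $(-1)^{j-1}h_{j-1}>0$ on $B_1\setminus\{0\}$ and set $w:=(-1)^{j-1}h_j$. Then $\Delta w = (-1)^{j-1}h_{j-1}\geq 0$ in $B_1$ with strict inequality off the origin, and $w|_{\partial B_1}=0$. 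For $j\geq 2$ the function $h_j$ is continuous on all of $B_1$ (in the radial setting this follows by integrating $(r^{n-1}h_j')' = r^{n-1}h_{j-1}$ twice from $0$, since the relevant integrands are integrable near $r=0$). The weak maximum principle gives $w\leq 0$, and since $\Delta w\not\equiv 0$, the strong maximum principle upgrades this to $w<0$ on $B_1\setminus\{0\}$, i.e.\ $(-1)^{j}h_j>0$ there.

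With the sign of $h_j$ in hand, Hopf's boundary lemma applied to $(-1)^{j}h_j$, smooth near $\partial B_1$, strictly positive in the interior, and vanishing on $\partial B_1$, yields
$$\frac{\partial}{\partial r}\bigl((-1)^{j}h_j\bigr)(x_0) < 0 \quad \text{for every } x_0 \in \partial B_1,$$
equivalently $(-1)^{j-1}\partial_r h_j(x_0)>0$. By radial symmetry this derivative is independent of $x_0$, so resubstituting $j=i+1$ and setting $c_i := (-1)^{i}\partial_r h_{i+1}|_{r=1}$ yields the lemma.

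The main obstacle is the singularity of $h_1$ at the origin, which in principle affects the strong maximum principle and Hopf at $0$ for $h_2$. This is handled by applying both principles on the annulus $\{\varepsilon<|x|<1\}$ and letting $\varepsilon\to 0^+$, noting that the boundary derivative at $r=1$ is unaffected by the truncation because every $h_j$ is smooth in a neighborhood of $\partial B_1$. Equivalently, one may dispense with the maximum principle altogether and solve the one-dimensional ODEs $(r^{n-1}h_j')' = r^{n-1}h_{j-1}$ explicitly by two successive integrations, reading off each $c_{j-1}$ as a positive multiple of an integral of $(-1)^{j-1}h_{j-1}$, with positivity propagating by the same induction.
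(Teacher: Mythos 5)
Your route is genuinely different from the paper's: the paper never analyzes the iterated functions $\Delta^{m-j}G$ directly. Instead it tests the integration-by-parts identity \eqref{+} against the $\Delta^m$-harmonic function $h$ with Navier data $(-\Delta)^i h=-1$ and $(-\Delta)^j h=0$ for $j\neq i$ on $\partial B_1$; Proposition \ref{propmax} gives $h(0)<0$, and \eqref{+} converts this into $c_i\omega_n=-h(0)>0$. That duality argument produces all the $c_i$ at once and sidesteps the interior singularity of $G$ entirely. Your direct induction is a legitimate alternative, but as written its main branch has a gap.

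The gap: the claim that $h_j=\Delta^{m-j}G$ is continuous on $B_1$ for $j\geq 2$ is false in general. Near the origin $h_j$ behaves like the fundamental solution of $\Delta^j$ in $\R{n}$, i.e.\ like a constant times $|x|^{2j-n}$ (or $\log|x|$ when $2j=n$), so it blows up whenever $2j\leq n$ --- which is precisely the regime in which the lemma is used in the paper ($n=2m$, operator $\Delta^{m-1}$). Indeed your own integration of $(r^{n-1}h_j')'=r^{n-1}h_{j-1}$ yields $h_j'\sim r^{2j-1-n}$, whose antiderivative diverges at $0$ for $2j<n$. Hence the weak maximum principle cannot be applied on all of $B_1$, and the annulus truncation you propose requires knowing the \emph{sign} of the singularity of $h_j$ at the origin: you need $(-1)^{j-1}h_j\to-\infty$ there, since otherwise the maximum of $w$ on $\{|x|=\varepsilon\}$ could be positive and the comparison fails. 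This sign is never established. The good news is that the radial ODE variant you relegate to your last sentence repairs everything: for $j\geq 2$ one has $h_j'(r)=r^{1-n}\int_0^r s^{n-1}h_{j-1}(s)\,ds$ (the constant of integration vanishes because $\Delta h_j=h_{j-1}$ carries no Dirac mass, and the integrand is $O(s^{2j-3})$ near $0$), so $(-1)^{j-1}h_j'>0$ on $(0,1]$ follows at once from the inductive sign of $h_{j-1}$; integrating from $r$ to $1$ and using $h_j(1)=0$ gives $(-1)^jh_j>0$ together with the correct sign of the singularity, and $c_{j-1}=(-1)^{j-1}h_j'(1)>0$ without any appeal to Hopf's lemma. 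I would promote that computation to the main argument rather than leave it as a remark.
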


\begin{proof} Since $G=G(|x|)$, we only need to show that $c_i>0$. Fix $i$ and let $h$ solve
$$\left\{
\begin{array}{ll}
\Delta^m h=0 &\textrm{in } B_1\\
(-\Delta)^i h =-1 & \textrm{on }\partial B_1\\
(-\Delta)^j h =0 & \textrm{on }\partial B_1 \textrm{ for }0\leq j\leq m-1,\;j\neq i.
\end{array}
\right.
$$
By Proposition \ref{propmax}, $h(0)<0$, hence \eqref{+} implies
$$0<-h(0)=(-1)^i\int_{\partial B_1}\frac{\partial \Delta^{m-1-i}G}{\partial r}dS= c_i\omega_n.$$
\end{proof}

\begin{lemma}\label{Deltav} Let $v:\R{2m}\to\R{}$ be defined as in \eqref{eqv}. Then
\begin{equation}\label{dv}
\lim_{|x|\to\infty}\Delta^{m-j} v(x)=0, \quad j=1,\ldots,m-1
\end{equation}
and for any $\ve>0$ there is $R>0$ such that for $|x|>R$
\begin{equation}\label{veps}
v(x)\leq (-2\alpha +\varepsilon)\log|x|.
\end{equation}
\end{lemma}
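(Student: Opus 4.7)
The plan is to exploit the explicit integral representations (from Lemma \ref{Deltapol} for $\Delta^k v$ and from \eqref{eqv} for $v$ itself) and in each case split the domain of integration into pieces where either the kernel is small or the measure $e^{2mu}\,dy$ has small mass, while handling the singular region $B_1(x)$ via H\"older's inequality and a uniform local exponential integrability estimate for $e^{2mu}$.

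\emph{Part \eqref{dv}.} For $k=m-j\in\{1,\dots,m-1\}$, Lemma \ref{Deltapol} yields
\[
\Delta^k v(x)=(-1)^k c_k\int_{\R{2m}}\frac{e^{2mu(y)}}{|x-y|^{2k}}\,dy,\qquad c_k>0.
\]
Given $\varepsilon>0$, first choose $R$ with $\int_{|y|>R}e^{2mu}\,dy<\varepsilon$. For $|x|>2R$ split $\R{2m}=\{|y|\le R\}\cup(\{|y|>R\}\setminus B_1(x))\cup B_1(x)$: the first two pieces contribute at most $(|x|-R)^{-2k}\|e^{2mu}\|_1\to0$ and $\varepsilon$ respectively, by trivial kernel bounds. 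For the critical piece on $B_1(x)$ I would apply H\"older with conjugate exponents $q,q'$ such that $2kq'<2m$, i.e.\ $q>\frac{m}{m-k}$, giving
\[
\int_{B_1(x)}\frac{e^{2mu}}{|x-y|^{2k}}\,dy \le C\,\|e^{2mu}\|_{L^q(B_1(x))}.
\]
Since $\|e^{2mu}\|_{L^1(B_1(x))}\to 0$, by interpolation it is enough to establish a \emph{uniform} bound $\|e^{2mu}\|_{L^{q^*}(B_1(x))}\le C$ for some $q^*>q$. To produce it I use the decomposition $u=v+p$ of Lemma \ref{Deltapol} combined with $\sup p<\infty$ from Lemma \ref{polinf}, which gives $e^{2mu}\le C'e^{2mv}$. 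Then, writing $v=v_{\mathrm{near}}+v_{\mathrm{far}}$ where $v_{\mathrm{near}}$ is the contribution in \eqref{eqv} from $z\in B_8(x)$, the Jensen-type argument in the proof of Theorem \ref{a2m} applied to $v_{\mathrm{near}}$ yields $e^{2mq^* v_{\mathrm{near}}}\in L^1(B_1(x))$ with bounded norm, which is available for arbitrarily large $q^*$ because $\|(2m-1)!e^{2mu}\|_{L^1(B_8(x))}\to0$; a direct estimation of $v_{\mathrm{far}}$ on $B_1(x)$, splitting the region of integration by $|z|\lessgtr|x|/2$ and using that $\log(|z|/|y-z|)$ is bounded whenever $|z|$ is either $\le|x|/2$ or $\ge 2|x|$, shows that $v_{\mathrm{far}}$ is bounded above uniformly on $B_1(x)$.

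\emph{Part \eqref{veps}.} Using the identity $\frac{(2m-1)!}{\gamma_m}\int e^{2mu}=2\alpha$ observed at the end of the proof of Lemma \ref{lemmabeta}, I would rewrite
\[
v(x)+2\alpha\log|x|=\frac{(2m-1)!}{\gamma_m}\int_{\R{2m}}\log\frac{|x|\,|y|}{|x-y|}\,e^{2mu(y)}\,dy,
\]
and decompose the integral into contributions from $\{|y|\le R\}$, $\{R<|y|\le|x|/2\}$, $\{|y|>|x|/2,|x-y|\ge1\}$ and $B_1(x)$. Direct estimates on the first three regions produce contributions of size $O(1)+o(\log|x|)$ by $e^{2mu}\in L^1$ and the choice of $R$; the piece on $B_1(x)$ carries a $\log(1/|x-y|)$ singularity, handled by H\"older against $\|\log(1/|x-\cdot|)\|_{L^p(B_1(x))}<\infty$ together with the same uniform local $L^{q^*}$ bound on $e^{2mu}$ as in Part \eqref{dv}.

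\emph{Main obstacle.} The crux is the uniform local exponential integrability estimate $\|e^{2mu}\|_{L^{q^*}(B_1(x))}\le C$. A straight application of Theorem \ref{a2m} to the Navier decomposition of $u$ on $B_4(x)$ leaves a polyharmonic remainder with no evident uniform upper bound; the idea is to bypass this by routing the estimate through the log-potential $v$ via $e^{2mu}\le C'e^{2mv}$, where the Brezis--Merle argument applies cleanly because only the small-mass near-part $v_{\mathrm{near}}$ enters its exponential, while $v_{\mathrm{far}}$ can be controlled pointwise.
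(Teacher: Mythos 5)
Your handling of \eqref{dv} and \eqref{veps}, modulo the uniform local integrability of $e^{2mu}$, follows the same pattern as the paper's Steps 1--2 (dominated convergence away from $x$, H\"older on the singular ball), and you correctly identify the crux: a bound $\|e^{2mu}\|_{L^{q^*}(B_1(x))}\le C$ uniform in $x$. Where you genuinely diverge is in how you propose to get it: the paper bounds the polyharmonic remainder $h=v-z$ of the Navier decomposition from above via the spherical-average identities \eqref{Du4}--\eqref{Du5}, the signs of the normal derivatives of the Green function (Lemma \ref{segnogreen}) and the $L^1$ bound \eqref{v+}, whereas you route everything through $e^{2mu}\le Ce^{2mv}$ (using $u=v+p$, $\sup p<\infty$) and a direct Br\'ezis--Merle/Jensen estimate on $v$. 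That strategy is legitimate and, if carried out, simpler than the paper's; but your execution has a gap exactly at the point you flag as the main obstacle.

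The gap is the claim that $v_{\mathrm{far}}$ is uniformly bounded above on $B_1(x)$. Your splitting by $|y|\lessgtr|x|/2$ leaves the annular region $\{|x|/2<|y|<2|x|\}\setminus B_8(x)$ uncovered: there one only has $\log\big(|y|/|w-y|\big)\le\log(2|x|/7)$, so this region contributes up to $\ve_x\log|x|$ with $\ve_x:=\int_{\{|x|/2<|y|<2|x|\}}e^{2mu}\,dy\to 0$ at no controlled rate. The same defect appears inside $v_{\mathrm{near}}$ through the factor $\log|y|\le\log(|x|+8)$, which you do not separate from the $\log\frac{1}{|w-y|}$ singularity. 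After exponentiation these excesses produce a factor $|x|^{C\ve_x}$, which need not be bounded ($\ve_x$ can decay like $(\log|x|)^{-1/2}$ along a sparse sequence while $e^{2mu}$ remains integrable), and interpolating against $\|e^{2mu}\|_{L^1(B_1(x))}\to 0$ does not repair this, since $\ve_x^{\theta}|x|^{C\ve_x}$ may still blow up. The conclusion you want is nevertheless true, but only because the bulk region $\{|y|\le R\}$, which you discarded as an $O(1)$ contribution, in fact contributes $(-2\alpha+o(1))\log|x|$ with $\alpha>0$, and this large negative term absorbs all the $o(1)\cdot\log|x|$ excesses. Concretely, you must first establish the analogue of inequality \eqref{211}, i.e.
$v(w)\le(-2\alpha+\ve)\log|w|+\frac{(2m-1)!}{\gamma_m}\int_{B_\tau(w)}\log\frac{1}{|w-y|}e^{2mu(y)}\,dy$,
and only then apply your Jensen argument to the remaining local singular term; with that correction your route closes.
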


\begin{proof} We proceed by steps.

\medskip

\noindent\emph{Step 1.} For any $\ve>0$ there is $R>0$ such that for $|x|\geq R$
\begin{equation}\label{211}
v(x)\leq \Big(-2\alpha+\frac{\ve}{2}\Big)\log|x| -\frac{(2m-1)!}{\gamma_m}\int_{B_\tau(x)}\log |x-y|e^{2m u(y)}dy,
\end{equation}
where $\tau\in (0,1)$ will be fixed later. The simple proof of \eqref{211} is very similar to the proof of Lemma \ref{lemmabeta} (see \cite[Pag. 213]{lin}), and it is omitted. Notice that the second term on the right-hand side may be very large.
Together with Fubini's theorem, \eqref{211} implies
\begin{eqnarray}
\int_{\R{2m}\bs B_R(0)}v^+dx&\leq& C\int_{\R{2m}}\int_{\R{2m}}\chi_{|x-y|\leq \tau}\log\frac{1}{|x-y|} e^{2mu(y)}dydx\nonumber\\
&=& C \int_{\R{2m}}e^{2mu(y)}\int_{B_\tau(y)}\log\frac{1}{|x-y|}dxdy \nonumber\\
&\leq &C\int_{R^{2m}}e^{2mu(y)}dy\leq C.\label{v+}
\end{eqnarray}

\medskip

\noindent\emph{Step 2.} From now on, $x$ will be a point in $\R{2m}$ with $|x|>R$, where $R$ is as in Step 1. Fix $p>1$ such that $p(2m-2)<2m$, and $p'=\frac{p}{p-1}$. By Theorem \ref{a2m}, there is $\delta>0$ such that if
\begin{equation}\label{b4x}
\int_{B_4(x)}e^{2mu}dy<\delta,
\end{equation}
then
\begin{equation}\label{zb4}
\int_{B_4(x)}e^{2mp' |z|}dy\leq C,
\end{equation}
with $C$ independent of $x$, where $z$ solves

$$\left\{
\begin{array}{ll}
(-\Delta)^m z= (2m-1)!e^{2mu} & \textrm{in }B_{4}(x)\\
\Delta^j z=0 &\textrm{on }\partial B_{4}(x) \textrm{ for } 0\leq j\leq m-1.
\end{array}
\right.$$
We now choose $R>0$ such that \eqref{b4x} is satisfied whenever $|x|\geq R$, and claim that for such $x$,
\begin{equation}\label{2mp}
\int_{B_\tau(x)}e^{2mp'u}dy\leq C\int_{B_\tau(x)} e^{2mp'|z|}dy\leq C\ve.
\end{equation}
We now observe that for any $\sigma>0$,
\begin{equation}\label{leb}
\int_{\R{2m}\backslash B_\sigma(x)}\frac{e^{2mu(y)}}{|x-y|^{2j}}dy\to 0\quad\textrm{as }|x|\to\infty
\end{equation}
by dominated convergence; by H\"older's inequality and \eqref{2mp}, if $\sigma$ is small enough,
$$\int_{B_\sigma(x)}\frac{e^{2mu}}{|x-y|^{2j}}dy\leq\bigg(\int_{B_\sigma(x)} e^{2mp'u}dy\bigg)^{\frac{1}{p'}}\bigg(\int_{B_\sigma(x)}\frac{1}{|x-y|^{2jp}}dy\bigg)^\frac{1}{p}\leq C\ve^\frac{1}{p'}. $$
Therefore
$$(-\Delta)^{j}v(x)=C\int_{\R{2m}}\frac{e^{2mu}}{|x-y|^{2j}}dy\to 0,\quad \textrm{as } |x|\to\infty.$$
Finally \eqref{veps} follows from \eqref{211}, \eqref{2mp} and H\"older's inequality.

\medskip

\noindent\emph{Step 3.} It remains to prove \eqref{2mp}. Set $h:=v-z$, so that
$$\left\{
\begin{array}{ll}
\Delta^m h= 0 & \textrm{in }B_{4}(x)\\
\Delta^j h=\Delta^j v &\textrm{on }\partial B_{4}(x) \textrm{ for } 0\leq j\leq m-1,
\end{array}
\right.$$
Integrating $(-\Delta)^m v=(2m-1)!e^{2mu}$ and then integrating by parts we get
$$(-1)^m\int_{\partial B_\rho(x)}\frac{\partial}{\partial r}(\Delta^{m-1} v)dS=(2m-1)!\int_{B_\rho(x)}e^{2mu}dy.$$
Dividing by $\omega_{2m}\rho^{2m-1}$, integrating on $[0,R]$ and using Fubini's, we find
\begin{multline*}
\int_0^R\Intm_{\partial B_\rho(x)}\frac{\partial}{\partial r}(\Delta^{m-1} v)d\sigma d\rho=\int_0^R\Intm_{\partial B_1(x)}\frac{\partial}{\partial r}(\Delta^{m-1} v(\rho,\theta))d\theta d\rho\\
=\Intm_{\partial B_1(x)}\int_{0}^R\frac{\partial}{\partial r}(\Delta^{m-1} v(\rho,\theta))d\rho d\theta
=\Intm_{\partial B_R(x)} \Delta^{m-1} v d\sigma -\Delta^{m-1} v(x).
\end{multline*}
Similarly
\begin{multline*}
\int_0^R\frac{1}{\rho^{2m-1}}\int_{B_\rho(x)}e^{2mu(y)}dyd\rho=
\int_0^R\frac{1}{\rho^{2m-1}}\int_{B_R(x)}e^{2mu(y)}\chi_{|x-y|\leq \rho}dyd\rho\\
=\int_{B_R(x)}e^{2mu(y)}\int_{|x-y|}^R\frac{1}{\rho^{2m-1}}d\rho dy\\
=\frac{1}{(2m-2)}\int_{B_R(x)}\bigg[\frac{1}{|x-y|^{2m-2}}-\frac{1}{R^{2m-2}}\bigg]e^{2mu(y)}dy.
\end{multline*}
Hence, multiplying above by $\frac{(2m-1)!}{\omega_{2m}}$ and setting $C_{m-1}:=\frac{(2m-1)!}{(2m-2)\omega_{2m}}$,
\begin{eqnarray*}
\Intm_{\partial B_R} (-\Delta)^{m-1} v d\sigma &=&(-\Delta)^{m-1}v(x)\\
&&- C_{m-1}\int_{B_R(x)}\bigg[\frac{1}{|x-y|^{2m-2}}-\frac{1}{R^{2m-2}}\bigg]e^{2mu(y)}dy\\
&=&C_{m-1} \bigg[\int_{|x-y|\geq R}\frac{e^{2m u(y)}}{|x-y|^{2m-2}}dy+\int_{B_R(x)}\frac{e^{2mu(y)}}{R^{2m-2}}dy\bigg]
\end{eqnarray*}
which implies at once, setting $R=4$,
\begin{equation}\label{Du4}
\Intm_{\partial B_4(x)}(-\Delta)^{m-1}vdS\leq C,
\end{equation}
with $C$ independent of $x$.
Similarly, one can show that
\begin{equation}\label{Du5}
\Intm_{\partial B_4(x)}(-\Delta)^{i}vdS\leq C,\quad 1\leq i\leq m-1.
\end{equation}
By Lemma \ref{segnogreen} and by \eqref{+} rescaled and translated to $B_4(x)$ and with the function $-\Delta h$ instead of $h$, $m-1$ instead of $m$, we obtain
\begin{eqnarray}
-\Delta h(x)&=&-\sum_{i=0}^{m-2}\int_{\partial B_4(x)}\frac{\partial \Delta^{m-1-i}G}{\partial n}\Delta^i(\Delta h)dS\label{deltah}\\
&=&\sum_{i=1}^{m-1}\int_{\partial B_4(x)}c_{i-1}(-\Delta)^ihdS\leq C,\nonumber
\end{eqnarray}
where $G$ is the Green function for $\Delta^{m-1}$ on $B_4(x)$:
$$\Delta^{m-1} G=\delta_x,\quad \Delta^iG=0,\textrm{ on }\partial B_4(x), \textrm{ for }0\leq i\leq m-2.$$
On the other hand, since the $c_i>0$, there is some $\tau>0$ such that the following holds: if $\xi\in B_{2\tau}(x)$ and $G_\xi$ is the Green's function defined by
$$\Delta^{m-1} G_\xi=\delta_\xi,\quad \Delta^iG_\xi=0,\textrm{ on }\partial B_4(x), \textrm{ for }0\leq i\leq m-2,$$
then also
$$0\leq (-1)^i\frac{\partial \Delta^{m-1-i}G_\xi(\eta)}{\partial r}\leq C, \quad \textrm{for }\eta\in\partial B_4(x),\;r:=\frac{\eta-x}{4}.$$
Therefore, as in \eqref{deltah}, we infer
\begin{equation}\label{hx}
-\Delta h\leq C\quad \textrm{on }B_{2\tau}(x),
\end{equation}
for some $\tau\in (0,2)$.
\medskip

On the other hand, thanks to \eqref{v+} and \eqref{zb4},
$$\int_{B_4(x)}h^+dy\leq \int_{B_4(x)} (v^++|z|)dy\leq C.$$
By elliptic estimates,
$$\sup_{B_\tau(x)}h \leq \Intm_{B_4(x)}h^+dy+ C\sup_{B_{2\tau(x)}}(-\Delta h)\leq C,$$
$C$ independent of $x$, as usual.
Since the polynomial $p$ is bounded from above, we infer
$$u\leq h+p+|z|\leq C +|z|,$$ and \eqref{2mp} follows at once.
\end{proof}

\begin{cor} Any solution $u$ of \eqref{eq0}, \eqref{area} is bounded from above.
\end{cor}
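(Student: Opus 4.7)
The corollary should fall out as an immediate consequence of stringing together the three main structural results that have just been established. The plan is to show that each of the three pieces in the decomposition $u = v + p$ is bounded from above, either globally or outside a compact set.

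First, I would invoke Lemma \ref{Deltapol} to write $u = v + p$ where $p$ is a polynomial of degree at most $2m-2$. Lemma \ref{polinf} gives $\sup_{\R{2m}} p < +\infty$, so $p$ contributes a globally bounded term.

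Next, for the $v$ part, I would apply the estimate \eqref{veps} from Lemma \ref{Deltav}: for every $\varepsilon>0$ there exists $R>0$ such that
\[
v(x) \;\leq\; (-2\alpha+\varepsilon)\log|x|,\qquad |x|>R.
\]
Since $e^{2mu}$ is a positive smooth function that is not identically zero, the quantity $\alpha$ defined in \eqref{area} is strictly positive. I would therefore choose $\varepsilon=\alpha$, obtaining $v(x)\leq -\alpha\log|x|$ for $|x|>R$; in particular $v$ is bounded from above (indeed tends to $-\infty$) outside a large ball.

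Combining these two bounds yields $u=v+p \leq C$ on $\{|x|>R\}$. Since $u$ is smooth on all of $\R{2m}$ by Corollary \ref{smooth}, it is in particular bounded on the compact set $\overline{B_R(0)}$, and so $u$ is bounded from above on all of $\R{2m}$. There is no real obstacle here — the only thing one has to observe is that $\alpha>0$, which allows the $(-2\alpha+\varepsilon)\log|x|$ term to be made strictly negative for large $|x|$; everything else is a direct concatenation of the preceding lemmas.
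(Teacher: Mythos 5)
Your proof is correct and follows essentially the same route as the paper: decompose $u=v+p$, use Lemma \ref{polinf} for $\sup p<+\infty$ and the bound \eqref{veps} (with $\alpha>0$) to get $v\to-\infty$ at infinity, and conclude by continuity on a compact set. The only difference is that you make explicit the observation $\alpha>0$, which the paper leaves implicit.
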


\begin{proof} Indeed $u$ is continuous, $u=v+p$, and
$$\lim_{|x|\to\infty}v(x)=-\infty,\quad \sup_{|x|\in \R{2m}}p(x)<+\infty,$$
by Lemma \ref{polinf}.
\end{proof}

\begin{lemma}\label{lemmao} Assume that $|u(x)|=o(|x|^2)$ as $|x|\to\infty$. Then $u=v+C$. Furthermore, for any $\ve>0$ there exists $R>0$ such that
\begin{equation}\label{226}
-2\alpha \log|x|-C\leq u(x)\leq (-2\alpha+\ve)\log|x|,
\end{equation}
for $|x|\geq R$.
\end{lemma}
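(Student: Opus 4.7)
The plan is to use the decomposition $u=v+p$ from Lemma \ref{Deltapol} together with the logarithmic control on $v$ (from Lemmas \ref{lemmabeta} and \ref{Deltav}) to force $p$ to be constant, and then to translate the two-sided bounds on $v$ into the desired bounds on $u$.

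First, I would recall that by Lemma \ref{Deltapol} we may write $u=v+p$ with $p$ a polynomial of degree at most $2m-2$. From Lemma \ref{lemmabeta} we have $v(x)\geq -2\alpha\log|x|+C$ for $|x|\geq 4$, while Lemma \ref{Deltav} gives $v(x)\leq(-2\alpha+\varepsilon)\log|x|$ for $|x|$ large. Combined, $|v(x)|\leq C(1+\log(1+|x|))$, so in particular $v(x)=o(|x|^2)$.

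Now, using the hypothesis $|u(x)|=o(|x|^2)$, we obtain
\[
|p(x)|=|u(x)-v(x)|=o(|x|^2)\quad\text{as }|x|\to\infty.
\]
Since $p$ is a polynomial, this growth bound forces $\deg p\leq 1$. By Lemma \ref{polinf}, however, $p$ is bounded above on $\R{2m}$ and $\deg p$ is even; a polynomial of degree $1$ is unbounded above, so we must have $\deg p=0$, i.e. $p\equiv C$ for some constant $C$. This proves the first assertion $u=v+C$.

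For the two-sided bound \eqref{226}, the lower bound is immediate: $u(x)=v(x)+C\geq -2\alpha\log|x|+C'$ for $|x|\geq 4$, by Lemma \ref{lemmabeta}. For the upper bound, fix $\varepsilon>0$ and apply Lemma \ref{Deltav} with $\varepsilon/2$ in place of $\varepsilon$ to obtain $R_0$ such that $v(x)\leq(-2\alpha+\varepsilon/2)\log|x|$ for $|x|\geq R_0$. Then
\[
u(x)\leq\Bigl(-2\alpha+\tfrac{\varepsilon}{2}\Bigr)\log|x|+C,
\]
and choosing $R\geq R_0$ large enough so that $C\leq \tfrac{\varepsilon}{2}\log|x|$ for $|x|\geq R$ gives $u(x)\leq(-2\alpha+\varepsilon)\log|x|$ on $\{|x|\geq R\}$, as required.

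There is no real obstacle here: the only substantive step is the polynomial-growth argument that rules out any non-constant $p$, and this is the leverage point where the hypothesis $u=o(|x|^2)$ is actually used, together with the crucial fact from Lemma \ref{polinf} that $\deg p$ is even (without which a linear $p$ could not be excluded on growth grounds alone).
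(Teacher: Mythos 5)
Your proof is correct and follows essentially the same route as the paper: decompose $u=v+p$, use the two-sided logarithmic control on $v$ to deduce $p=o(|x|^2)$, exclude $\deg p=1$ via Lemma \ref{polinf} (the paper invokes the evenness of $\deg p$ for exactly this purpose), and then read off \eqref{226} from Lemmas \ref{lemmabeta} and \ref{Deltav}. The only difference is that you spell out the absorption of the additive constant into $\tfrac{\ve}{2}\log|x|$, which the paper leaves implicit.
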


\begin{proof} Since $v(x)= -2\alpha\log|x|+o(\log|x|)$ at $\infty$, if $\deg p\geq 2$, we have that $u(x)=v(x)+p(x)$ cannot be $o(|x|^2)$. Hence, knowing that $\deg p$ is even, we get $u=v+C$ for some constant $C$. Then \eqref{226} follows at once from Lemma \ref{lemmabeta} and Lemma \ref{Deltav}.
\end{proof}

\begin{lemma}\label{scalar}  Set $g_u=e^{2u}g_{\R{2m}}$. If $u$ is a standard solution, then
$$R_{g_u}\equiv 2m(2m-1).$$
If $u$ is not a standard solution, then
\begin{equation}\label{Rinf}
\liminf_{|x|\to+\infty} R_{g_u}(x) =-\infty.
\end{equation} 
\end{lemma}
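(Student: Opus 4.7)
The plan rests on the conformal transformation formula for scalar curvature: since $g_u = e^{2u}g_{\R{2m}}$ and the Euclidean scalar curvature vanishes,
$$R_{g_u} = -e^{-2u}\bigl(2(2m-1)\Delta u + (2m-1)(2m-2)|\nabla u|^2\bigr).$$
For the standard solutions, I would substitute $u(x)=\log\tfrac{2\lambda}{1+\lambda^2|x-x_0|^2}$ directly into this formula; a short computation shows that the $|x-x_0|^2$-dependent contributions from $\Delta u$ and from $|\nabla u|^2$ cancel, leaving the constant $2m(2m-1)$. One could alternatively observe that $g_u$ is the pullback of the round metric on $S^{2m}$ under the inverse stereographic projection and quote the value $n(n-1)$ of the round scalar curvature.

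For the non-standard case I would use the decomposition $u = v+p$ from Theorem \ref{clas1}, and set $d:=\deg p$. Theorem \ref{clas2}(iv) gives $d\geq 1$, while Lemma \ref{polinf} gives $d$ even with $\sup_{\R{2m}}p<\infty$, so in fact $d \in \{2,4,\ldots,2m-2\}$. Write $p=p_d+p_{\mathrm{low}}$ with $p_d$ the homogeneous top-degree piece. The upper bound on $p$ forces $p_d(\omega)\leq 0$ for every $\omega\in S^{2m-1}$, and since $p_d\not\equiv 0$ there is some unit vector $\omega_0$ with $p_d(\omega_0)<0$. Euler's homogeneity identity $d\,p_d(\omega_0)=\omega_0\cdot\nabla p_d(\omega_0)$ then automatically yields $\nabla p_d(\omega_0)\neq 0$.

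The heart of the argument is to follow the ray $x_t:=t\omega_0$ as $t\to+\infty$. Along this ray, $p(x_t)=t^d p_d(\omega_0)+O(t^{d-1})\to -\infty$ while $v(x_t)=O(\log t)$ by Lemma \ref{Deltav}, so $u(x_t)\leq -c_1 t^d$ and $e^{-2u(x_t)}$ grows at exponential rate in $t^d$. The gradient of $p$ at $x_t$ is $t^{d-1}\nabla p_d(\omega_0)+O(t^{d-2})$ with $|\nabla p_d(\omega_0)|>0$; once one knows $|\nabla v(x)|\to 0$ as $|x|\to\infty$, it follows that $|\nabla u(x_t)|^2\sim|\nabla p_d(\omega_0)|^2\,t^{2(d-1)}$. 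On the other hand $\Delta u(x_t)=\Delta v(x_t)+\Delta p(x_t)=o(1)+O(t^{d-2})$ by Lemma \ref{Deltav} and homogeneity. Since $m\geq 2$ (the case $m=1$ has no non-standard solutions by \cite{CL}), the coefficient $(2m-1)(2m-2)$ is strictly positive, so the bracket $2(2m-1)\Delta u(x_t)+(2m-1)(2m-2)|\nabla u(x_t)|^2$ is positive and grows like $t^{2(d-1)}$; multiplied by the factor $-e^{-2u(x_t)}$ this drives $R_{g_u}(x_t)$ to $-\infty$, which is exactly what we need.

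The main technical point will be establishing $|\nabla v(x)|\to 0$ as $|x|\to\infty$. I would obtain it by differentiating under the integral in \eqref{eqv} and splitting the domain into $B_{|x|/2}(x)$ and its complement, much as in the proof of Lemma \ref{lemmabeta}: on the outer region one uses $|x-y|\geq|x|/2$ together with $e^{2mu}\in L^1$, while on the inner region one combines the upper bound $u(y)\leq C-\delta\log|y|$ for large $|y|$ (from Lemma \ref{Deltav}) with H\"older's inequality and the local $L^p$-integrability of $e^{2mu}$ near infinity established in Step 2 of the proof of Lemma \ref{Deltav}. This qualitative decay of $\nabla v$ is much weaker than the polynomial blow-up $t^{d-1}$ of $\nabla p(x_t)$, so it suffices for the dominant-term argument.
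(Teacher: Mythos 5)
Your proposal is correct and follows essentially the same route as the paper: the same conformal transformation formula for $R_{g_u}$, the decomposition $u=v+p$ with $\nabla v\to 0$ and $\Delta v\to 0$ at infinity, and the observation that $|\nabla p|^2$ (degree $2d-2$) dominates $\Delta p$ (degree $\leq d-2$), forcing the bracket to $+\infty$ along suitable directions while $e^{-2u}$ stays bounded below. Your use of Euler's identity to pin down an explicit ray is just a more concrete rendering of the paper's $\limsup$ argument.
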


\begin{proof}  Assume that $u$ is a standard solution and set
\begin{equation}\label{g1}
u_\lambda(x):=\log\frac{2\lambda}{1+\lambda^2 |x|^2},\quad g_\lambda:=e^{2u_\lambda}g_{\R{2m}}.
\end{equation}
Then, up to translation, $u=u_\lambda$ for some $\lambda>0$. Since
$g_1=(\pi^{-1})^*g_{S^{2m}},$ where $\pi$ is the stereographic projection,
we have $R_{g_1}\equiv 2m(2m-1)$. Then consider the diffeomorphism of $\R{2m}$ defined by $\varphi_\lambda(x):=\lambda x$. Then $g_\lambda=\varphi_\lambda^* g_1$, hence $R_{g_\lambda}=R_{g_1}\circ \varphi_\lambda\equiv 2m(2m-1)$.

Assume now that $u=v+p$ is not a standard solution.
Since $g_{\R{2m}}$ is flat, the formula for the conformal change of scalar curvature, in the case $m>1$, reduces to
\begin{equation}\label{cambiaR}
R_{g_u}=-2(2m-1)e^{-2u}\Big(\Delta u+(m-1)|\nabla u|^2\Big),
\end{equation}
see for instance \cite{SY} pag 184.
Then differentiating the expression \eqref{eqv} for $v$ and using that $u\leq C$, we find that $|\nabla v(x)|\to 0$ as $|x|\to \infty$. We have already seen that $\Delta v(x)\to 0$ as $|x|\to \infty$; since $\deg p\geq 2$ implies
$$\deg \Delta p<\deg |\nabla p|^2,$$
we then have 
$$\limsup_{|x|\to\infty} \Big(\Delta u+(m-1)|\nabla u|^2\Big)=\limsup_{|x|\to\infty} \Big(\Delta p+(m-1)|\nabla p|^2\Big)=+\infty.$$
Observing that $e^{-2u}\geq \frac{1}{C}>0$, $u$ being bounded from above, we easily obtain \eqref{Rinf}.
\end{proof}

\medskip

\noindent\emph{Proof of Theorem \ref{clas1}.} Put together Lemmas \ref{lemmabeta}, \ref{Deltapol}, \ref{polinf} and \ref{Deltav}.
\hfill $\square$

\medskip

\noindent\emph{Proof of Theorem \ref{clas2}.}  (i) $\Rightarrow$ (iii) is obvious, while (iii) $\Rightarrow$ (i) follows from the argument of \cite{WX}.

\medskip

\noindent (iii) $\Leftrightarrow$ (iv) follows from Theorem \ref{clas1}.

\medskip

\noindent (iv) $\Rightarrow$ (ii') $\Rightarrow$ (ii). Assume that $\deg p=0$. Then by Theorem \ref{clas1},
$$\lim_{|x|\to\infty} \Delta^j u(x)=\lim_{|x|\to|\infty}\Delta^j p(x)=0, \quad 1\leq j\leq m-1.$$

\medskip

\noindent (ii) $\Rightarrow$ (iv). By Theorem \ref{clas1}, $\sup_{\R{2m}} p<\infty$ and
$$\lim_{|x|\to\infty}\Delta p(x)=\lim_{|x|\to\infty}\Delta u=0,$$
hence $\Delta p\equiv 0$ and, by Liouville's theorem, $p$ is constant.
\medskip

\medskip

\noindent (i) $\Leftrightarrow$ (v) follows from Lemma \ref{scalar}.

\medskip

\noindent (i) $\Rightarrow$ (vi) Given a conformal diffeomorphism $\varphi$ of $\R{2m}$, $\widetilde\varphi:=\pi^{-1}\circ \varphi\circ\pi$ is a conformal diffeomorphism of $S^{2m}$. Any metric of the form $g_u=e^{2u}g_{\R{2m}}$, with $u$ standard solution of \eqref{eq0}, can be easily written as $\varphi^* g_1$, for some conformal diffeomorphism $\varphi$ of $\R{2m}, $where $g_1$ is as in \eqref{g1}. Then
$$\pi^* g_u=\pi^*\varphi^*g_1=(\varphi\circ\pi)^* g_1=(\pi\circ\widetilde \varphi)^*g_1=\widetilde\varphi^*\pi^* g_1=\widetilde \varphi^*g_{S^2},$$
and clearly $\widetilde \varphi^*g_{S^2}$ is a smooth Riemannian metric on $S^{2m}$.

\medskip

\noindent (vi) $\Rightarrow$ (i). Assume $u$ is non-standard. Then $u=v+p$, $\deg p\geq 2$. Considering that $\sup_{\R{2m}}p<+\infty$, we infer that $p$ goes to $-\infty$ at least quadratically in some directions. Let $S=(0,\ldots,0,1)\in S^{2m}$ be the South Pole, and 
$$\pi:S^{2m}\backslash\{S\} \to \R{2m},\quad \pi(\xi):=\frac{(\xi_1,\ldots,\xi_{2m})}{1+\xi_{2m+1}}$$
be the stereographic projection from $S$.
Then
$$(\pi^{-1})^* g_{S^{2m}}=\rho_0 g_{\R{2m}},\quad \rho_0(x):=\frac{4}{(1+|x|^2)^2}, $$
and
$$\pi^* g_u=\rho_1 g_{S^{2m}},\quad \rho_1:=\frac{e^{2u}}{\rho_0}\circ\pi\in C^\infty(S^{2m}\backslash \{S\}).$$
Since $e^{2u(x)}\to 0$ more rapidly than $|x|^{-4}$ in some directions, we have
$$\liminf_{\xi\to S}\rho_1(\xi)=\liminf_{|x|\to \infty}\frac{e^{2u(x)}}{\rho_0(x)}=0,$$
hence $\rho_1 g_{S^{2m}}$ does not extend to a Riemannian metric on $S^{2m}$.

\medskip

To prove \eqref{deltaa}, let $j$ be the largest integer such that $\Delta^j p\neq 0$. Then $\Delta^{j+1} p\equiv 0$ and from Theorem \ref{trmliou2} we infer that $\deg p\leq 2j$. In fact $\deg p=2j$ and $\Delta^j p\equiv C_0\neq 0$.
From Pizzetti's formula \eqref{pizze}, we have
$$2m\sum_{i=0}^j b_iR^{2i}\Delta^i p(0)=\Intm_{\partial B_R}2m p dS$$
Exponentiating and using Jensen's inequality and Lemma \ref{lemmabeta}, we infer
$$\exp\Big(2m\sum_{i=0}^j b_iR^{2i}\Delta^i p(0)\Big)\leq\Intm_{\partial B_R}e^{2mp}dS\leq C R^{4m\alpha}\Intm_{\partial B_R}e^{2mu}d S,$$
for $R\geq 4$. Therefore
$$\varphi(R):= R^{-4m\alpha+2m-1}\exp\Big(2m\sum_{i=0}^j b_iR^{2i}\Delta^i p(0)\Big)\in L^1([4,+\infty)),$$
and this is not possible if $C_0=\Delta^j p > 0$, hence $C_0<0$.

\hfill $\square$

\section{Examples}

Following an argument of \cite{CC}, we now see that solutions of the kind $v+p$ actually exist, even among radially symmetric functions, with $\deg p=2m-2$, and with $\deg p=2$. For simplicity, we only treat the case when $m$ is even; if $m$ is odd, the proof is similar. We need the following lemma.

\begin{lemma}\label{lapl} Let $u(r)$ be a smooth radially symmetric function on $\R{n}$, $n\geq 1$. Then for $m\geq 0$ we have
\begin{equation}\label{eqlapl}
\Delta^m u(0)=\frac{n}{c_m (n+2m)(2m)!}u^{(2m)}(0),
\end{equation}
where the $c_i$'s are the constants in Pizzetti's formula, and $u^{(2m)}:=\frac{\partial^{2m}u}{\partial r^{2m}}$. In particular $\Delta^mu(0)$ has the sign of $u^{(2m)}(0)$.
\end{lemma}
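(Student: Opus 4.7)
The plan is to compare two expressions for the spherical average (or, equivalently, Taylor expansions) near the origin. Because $u$ is smooth and radial on $\R{n}$, its Taylor expansion at $0$ can only contain terms that are rotationally invariant homogeneous polynomials; the degree-$k$ rotationally invariant homogeneous polynomial is a multiple of $|x|^k$, and $|x|^k$ is smooth at $0$ only when $k$ is even. Hence
\[
u(x)=\sum_{k=0}^{m}a_k |x|^{2k}+E(x),\qquad a_k=\frac{u^{(2k)}(0)}{(2k)!},
\]
where $E(x)=O(|x|^{2m+2})$ and, by Taylor's theorem, all partial derivatives of $E$ of order up to $2m$ vanish at $0$. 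In particular the odd radial derivatives $u^{(2k+1)}(0)$ are zero.

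Next I would apply $\Delta^m$ term by term and evaluate at the origin. For $k<m$, the polynomial $|x|^{2k}$ has degree $2k\leq 2m-2$, hence $\Delta^m|x|^{2k}\equiv 0$; and $\Delta^m E(0)=0$ because $\Delta^m$ is a linear combination of partial derivatives of order $2m$, all of which vanish at $0$ for $E$. Thus only the $k=m$ term survives, giving
\[
\Delta^m u(0)=a_m\,\Delta^m(|x|^{2m})\big|_{x=0}=\frac{u^{(2m)}(0)}{(2m)!}\,\Delta^m(|x|^{2m}),
\]
where $\Delta^m(|x|^{2m})$ is actually the \emph{constant} obtained by iterating $\Delta(|x|^{2k})=2k(2k+n-2)|x|^{2k-2}$:
\[
\Delta^m(|x|^{2m})=\prod_{k=1}^{m}2k(2k+n-2)=\frac{(2m)!!\,(2m+n-2)!!}{(n-2)!!}.
\]

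Finally I would plug in the explicit value \eqref{coeff} of $c_m$ and check the arithmetic: since $c_m(n+2m)(2m)!!\,(2m+n-2)!!=n(n-2)!!$, the two constants match, yielding
\[
\Delta^m u(0)=\frac{n}{c_m(n+2m)(2m)!}\,u^{(2m)}(0),
\]
and the sign statement is immediate. The only real work is the bookkeeping with double factorials when identifying $\Delta^m(|x|^{2m})$ with $\tfrac{n}{c_m(n+2m)(2m)!}\cdot(2m)!$; there is no analytic obstacle. A conceptually slightly different route, which one could use as a cross-check, would be to compare the Taylor expansion of $\frac{n}{R^n}\int_0^R u(r)r^{n-1}dr$ with one extra term of Pizzetti's formula \eqref{pizgen}, but the direct computation above is the cleanest.
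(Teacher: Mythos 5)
Your proof is correct, and it takes a genuinely different route from the paper's. The paper proves by induction on $m$ the intermediate identity $c_m\Delta^m u(0)=\frac{1}{R^{2m}}\Intm_{B_R}\frac{r^{2m}}{(2m)!}u^{(2m)}(0)\,dx$: it invokes Pizzetti's formula with remainder \eqref{pizgen}, divides by $R^{2m}$, lets $R\to 0$, and substitutes the one-dimensional Taylor expansion of $u(r)$ so that the lower-order terms cancel against the inductive hypothesis; the constant is then extracted from \eqref{intR}. You instead work entirely at the level of the Taylor polynomial: rotational invariance plus smoothness forces it to be a polynomial in $|x|^2$ with coefficients $u^{(2k)}(0)/(2k)!$, and applying $\Delta^m$ kills everything except the top term and the remainder (whose order-$2m$ derivatives vanish at $0$), leaving $\Delta^m u(0)=\frac{u^{(2m)}(0)}{(2m)!}\Delta^m(|x|^{2m})$. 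This is more elementary --- no induction, no limit $R\to 0$, no appeal to \eqref{pizgen} --- but it does require the explicit value \eqref{coeff} of $c_m$ to recognize $\Delta^m(|x|^{2m})$ as $\frac{n}{c_m(n+2m)}$, whereas the paper's argument keeps $c_m$ symbolic throughout. There is no circularity in this: \eqref{coeff} was obtained in the paper by testing Pizzetti's formula with the very same functions $|x|^{2i}$, independently of this lemma, and indeed that computation already contains the identity $c_m\Delta^m(|x|^{2m})=\frac{n}{n+2m}$ that your final bookkeeping verifies. Your arithmetic with the double factorials checks out, including the edge cases $n=1,2$ under the usual conventions, and $m=0$ is trivial.
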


\begin{proof} We first prove that
\begin{equation}\label{eqlapl2}
c_m\Delta^m u(0)=\frac{1}{R^{2m}}\Intm_{B_R(0)}\frac{r^{2m}}{(2m)!}u^{(2m)}(0)dx.
\end{equation}
Then, observing that
\begin{equation}\label{intR}
\Intm_{B_R(0)}\frac{r^{2m}}{(2m)!}dx=\frac{nR^{2m}}{(n+2m)(2m)!},
\end{equation}
\eqref{eqlapl} follows at once.
We prove \eqref{eqlapl2} by induction. The case $m=0$ reduces to $u(0)=u(0)$. Let us now assume that \eqref{eqlapl2} has been proven for $i=0,\ldots,m-1$ and let us prove it for $m$. Since $u$ is smooth, we have $u^{(i)}(0)=0$ for any odd $i$, hence Taylor's formula reduces to
$$u(r)=\sum_{i=0}^m \frac{r^{2i}}{(2i)!}u^{(2i)}(0)+o(r^{2m+1}).$$
We now divide by $R^{2m}$ in \eqref{pizgen}, take the limit as $R\to 0$ and, observing that $\Delta^{m+1}u(\xi)$ remains bounded as $R\to 0$, we find
\begin{eqnarray*}
\lim_{R\to 0}\frac{\intm_{B_R}\Big(u-\sum_{i=0}^{m-1}c_iR^{2i}\Delta^iu(0)\Big)dx}{R^{2m}}&=&c_m\Delta^mu(0).
\end{eqnarray*}
Substituting Taylor's formula and using the inductive hypothesis, we see that most of the terms on the left-hand side cancel out (before taking the limit) and we are left with
$$\lim_{R\to 0}\frac{1}{R^{2m}}\Intm_{B_R}\bigg(\frac{r^{2m}u^{(2m)}(0)}{(2m)!}+o(r^{2m+1})\bigg)dx=c_m\Delta^m u(0).$$
Finally, to deduce \eqref{eqlapl2}, observe that, $\frac{1}{R^{2m}}\intm_{B_R(0)}o(r^{2m+1})dx\to 0$ as $R\to 0$,
while $\frac{1}{R^{2m}}\intm_{B_R}\frac{r^{2m}u^{(2m)}(0)}{(2m)!}dx$ does not depend on $R$ thanks to \eqref{intR}.
\end{proof}

\begin{prop}\label{esempio} For every $m\geq 2$ \emph{even}, there exists a radially symmetric function $u$ solving \eqref{eq0}, \eqref{area} with $u(x)=-C|x|^{2m-2}+O(|x|^{2m-4})$.
\end{prop}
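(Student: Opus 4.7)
The plan is to adapt the radial shooting argument of Chang-Chen \cite{CC}. Restricting to radial solutions $u=u(r)$ reduces \eqref{eq0} to an ODE of order $2m$; since $m$ is even, $\Delta^{m} u = (-\Delta)^m u = (2m-1)!\,e^{2mu}>0$. Smoothness at $r=0$ forces all odd-order derivatives to vanish there, and by Lemma \ref{lapl} the free initial data can be taken to be the values $\Delta^j u(0)$, $j=0,1,\dots,m-1$. Standard ODE theory then produces a unique local smooth solution. My choice of shooting data is
\[
  \Delta^j u(0) = 0 \quad (1 \le j \le m-2), \qquad \Delta^{m-1}u(0) = \beta < 0,
\]
with $\beta$ fixed, leaving $u(0)$ as the shooting parameter.

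Iterating the radial identity $\Delta g = r^{1-2m}(r^{2m-1} g')'$ gives, as long as the solution exists,
\[
  \Delta^{m-1} u(r) \;=\; \beta \;+\; (2m-1)!\int_0^r t^{1-2m}\!\int_0^t s^{2m-1} e^{2mu(s)}\,ds\,dt,
\]
and inductively $\Delta^{j} u(r)$ is given by an explicit polynomial in $r$ determined by the initial data, plus an $(m-j)$-fold iterated integral of $e^{2mu}$. Since $\Delta^j u(0)=0$ for $1\le j\le m-2$, and since the basis functions $r^{2k}$ satisfy $\Delta^j r^{2k}|_{r=0}=0$ unless $j=k$, the polynomial part of $u$ must have the form $q(r) = u(0) + b_m\beta\, r^{2m-2}$ with $b_m>0$ a universal constant depending only on $m$. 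As $\beta<0$ this drives $q(r)\to-\infty$ at rate $r^{2m-2}$, providing the decay needed to make $e^{2mu}\in L^1(\R{2m})$.

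The remaining steps are: (i) global existence on $[0,\infty)$ of the radial solution; (ii) the a priori bound $u(r) \le \tfrac12 b_m\beta\, r^{2m-2} + C$ for $r$ large, giving $\int_{\R{2m}} e^{2mu}\,dx<\infty$; (iii) identification of the leading asymptotic. Once (i) and (ii) are in hand, Theorem \ref{clas1} yields the decomposition $u = v+p$ with $\sup p<\infty$ and $v(x) = -2\alpha\log|x|+o(\log|x|)$, and comparison with the iterated-integral formula above shows $p$ must carry the leading $-C r^{2m-2}$ term, so $\deg p = 2m-2$; the expansion $u(r) = -C r^{2m-2}+O(r^{2m-4})$ then follows because the sub-leading polynomial and logarithmic contributions are both of that size or smaller.

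The main obstacle is step (i)-(ii): the feedback from the nested integrals of $e^{2mu}$ back into $u$ could in principle blow up before the polynomial drift takes over. The cleanest remedy, following \cite{CC}, is to take $u(0)$ very negative so that $e^{2mu}$ is negligible on a first interval $[0,R_0]$, argue by continuity of the maximal existence time in the initial data that the polynomial term dominates beyond $R_0$, and close a Gr\"onwall-type argument using that $r^{2m-1}\exp(m b_m\beta\, r^{2m-2})$ is integrable for any $\beta<0$. With a suitable choice of shooting parameter this yields a global solution satisfying \eqref{area} and the stated asymptotic, completing the construction.
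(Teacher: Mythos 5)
Your construction is sound in outline and reaches the same conclusion, but by a genuinely different mechanism than the paper. The paper also restricts to the radial ODE and shoots, but it keeps $u(0)=\log 2$ fixed and instead perturbs the higher even derivatives downward relative to the explicit standard solution $w_0=\log\frac{2}{1+r^2}$, with strict inequality only at order $2m-2$; the a priori upper bound $u\le w_0$ and then $u\le -Cr^{2m-2}$ come from a two-sided comparison argument (sign propagation of $\Delta^{j}(w_0-u)$ through the iterated radial integrals, plus a polynomial barrier $w_1$ from below for global existence). You instead zero out the intermediate Laplacians, impose $\Delta^{m-1}u(0)=\beta<0$, and buy the a priori bound by taking $u(0)\ll 0$ so that the nonlinear feedback $N[e^{2mu}]$ is a small perturbation of the polyharmonic drift $q(r)=u(0)+b_m\beta r^{2m-2}$; the integrability of $r^{2m-1}e^{mb_m\beta r^{2m-2}}$ is indeed exactly what closes the bootstrap $\Delta^{m-1}u\le\beta+\varepsilon<0$. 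What each approach buys: the paper's comparison works for a whole open set of initial data without any smallness (hence the remark on the abundance of solutions), while your perturbative scheme is arguably more transparent and quantitative, and the normalization $u(0)\ll 0$ is harmless since the rescaling $u(\lambda x)+\log\lambda$ recovers any value of $u(0)$ afterwards. Both proofs delegate the final expansion to Theorem \ref{clas1}.

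One step you should tighten: ``continuity of the maximal existence time in the initial data'' is not a valid tool here (the maximal existence time is in general only lower semicontinuous, and in any case it does not by itself show that the polynomial term dominates). The correct way to close is the one you already hint at: run the bootstrap directly on the maximal interval $[0,T^*)$ --- assume $u\le u(0)+\tfrac12 b_m\beta r^{2m-2}+1$ on a subinterval, feed this into the $m$-fold iterated integral to get $\Delta^{m-1}u\le \beta+Ce^{2mu(0)}\le \beta/2$ for $u(0)$ negative enough, integrate down to recover a strictly better bound, and conclude by openness/closedness that the bound holds on all of $[0,T^*)$; since $u$ is then trapped between $q$ and this upper barrier, all $\Delta^j u$ are bounded on compact sets and $T^*=\infty$. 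Also note that in the limit $\Delta^{m-1}u\to\beta+L$ with $L=\lim N_1[(2m-1)!e^{2mu}]>0$, so the constant $C$ in $-C|x|^{2m-2}$ is $-b_m(\beta+L)$ rather than $-b_m\beta$; your smallness assumption on $u(0)$ guarantees $\beta+L<0$, which is what makes the leading coefficient genuinely negative.
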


\begin{proof} Set $w_0=\log\frac{2}{1+r^2}$. Then $\Delta^m w_0=(2m-1)!e^{2mw_0}$. Define $u=u(r)$ to be the unique solution to the following ODE
$$
\left\{
\begin{array}{ll}
\Delta^m u=(2m-1)!e^{2mu}&\\
u(0)=\log 2&\\
u^{(2j+1)}(0)=0&j=0,\ldots,m-1\rule{0cm}{.4cm}\\
u^{(2j)}(0)=\alpha_j\leq w_0^{(2j)}(0)&j=1,\ldots,m-2\rule{0cm}{.4cm}\\
u^{(2m-2)}(0)=\alpha_{m-1}<w_0^{(2m-2)}(0)&\rule{0cm}{.4cm}
\end{array}
\right.
$$
where the $\alpha_j$'s are fixed.
We shall first see that $w_0\geq u$. Set $g:=w_0-u$. Then $g(r)> 0$ for $r>0$ small enough, hence also $\Delta^m g> 0$ for small $r>0$. From Lemma \ref{lapl} we get
\begin{equation}\label{deltai}
\Delta^{j}g(0)\geq 0,\quad j=1,\ldots, m-2;\qquad \Delta^{m-1}g(0)>0.
\end{equation}
We can prove inductively that $\Delta^{m-j} g\geq 0$, $j=0,\ldots,m-1$ as long as $g(r)>0$. Indeed
\begin{equation}\label{intdelta}
\int_{B_R(0)}\Delta^j g dx=\int_{\partial B_R(0)} \frac{\partial \Delta^{j-1}g}{\partial r}d\sigma,
\end{equation}
hence, as long as $g(r)> 0$, we have $\frac{\partial \Delta^{j-1}g}{\partial r}> 0$, in particular $\frac{\partial g}{\partial r}> 0$, hence $g(r)>0$ for all $r>0$ for which it is defined. 
From \eqref{deltai} and \eqref{intdelta} we inductively infer
$$\Delta^{m-j}g(r)\geq Cr^{2j-2},$$
and, since $\Delta w_0(r)\to 0$ as $r\to\infty$, there is $r_0>0$ such that
$$\Delta u\leq -Cr^{2m-4},\quad \textrm{for }r\geq r_0,$$
integrating which, we find
\begin{equation}\label{asym}
u(r)\leq -Cr^{2m-2}\quad \textrm{for } r\geq r_0.
\end{equation}
To estimate $u$ from below, we use the function
$$w_1(r)=\log 2- C_1r^2 -\ldots -C_{m-1}r^{2m-2},$$
where the constants $C_i$ are chosen so that
$$\Delta^j u(0)\geq \Delta^j w_1(0).$$
Then we can proceed as above to prove that $u-w_1\geq 0$. Hence the solution exists for all times and, thanks to \eqref{asym} and Theorem \ref{clas1}, it has the asymptotic behaviour
$$u(r)=-Cr^{2m-2}+O(r^{2m-4}).$$ 
\end{proof}

\begin{rmk} Observe the abundance of solutions: we can choose the $(m-1)$-tuple of initial data $(\alpha_1,\ldots,\alpha_{m-1})$ in a set containing an open subset of $\R{m-1}$.
\end{rmk}

In the next example we show a radially symmetric solution in $\R{2m}$, $m\geq 4$ even, of the form $u=v+p$, with $\deg p=2$, thus showing that the hypothesis $u(x)=o(|x|^2)$ as $|x|\to \infty$ in Theorem \ref{clas2} is sharp.

\begin{prop} Let $w_0(r):=\log\frac{2}{1+r^2}$ and let $u=u(r)$ ($r=|x|$, $x\in\R{2m}$ and $m$ even) solve the following ODE:
$$
\left\{
\begin{array}{ll}
\Delta^m u=(2m-1)!e^{2mu}&\\
u(0)=\log 2&\\
u^{(2j+1)}(0)=0& j=0,\ldots, m-1\rule{0cm}{.4cm}\\
u^{(2j)}(0)=w_0^{(2j)}(0)&j=2,3,\ldots,m-1\rule{0cm}{.4cm}\\
u''(0)=w_0''(0)-1.\rule{0cm}{.4cm}
\end{array}
\right.
$$
Then $u(r)$ is defined for all $r\geq 0$ and $u(r)= -C r^2+o(r^2)$ as $r\to+\infty$.
\end{prop}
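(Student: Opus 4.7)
The plan follows Proposition \ref{esempio} closely. Set $g:=w_0-u$; the initial conditions of $u$ together with the Taylor expansion of $w_0$ give $g(0)=0$, all odd derivatives of $g$ at $0$ vanish, $g''(0)=1$, and $g^{(2j)}(0)=0$ for $j=2,\ldots,m-1$. By Lemma \ref{lapl} this translates into
\[
\Delta g(0)>0, \qquad \Delta^{j}g(0)=0\quad (j=2,\ldots,m-1),
\]
while $u(0)=w_0(0)$ yields $\Delta^m g(0)=(2m-1)!(e^{2mw_0(0)}-e^{2mu(0)})=0$. Near $r=0$ we have $g(r)=r^2/2+O(r^4)>0$, hence $u<w_0$ and $\Delta^m g=(2m-1)!(e^{2mw_0}-e^{2mu})>0$.

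I would then rerun the inductive integration from Proposition \ref{esempio}. As long as $g>0$, the identity
\[
\int_{B_\rho}\Delta^{j}g\,dx=\int_{\partial B_\rho}\frac{\partial\Delta^{j-1}g}{\partial r}\,d\sigma
\]
lets us propagate strict positivity from $\Delta^m g>0$ downwards through each $\Delta^{m-k}g$, the vanishing $\Delta^{j}g(0)=0$ for $j=2,\ldots,m-1$ serving as the anchor ensuring $\Delta^{j}g\ge 0$. At the bottom of the chain $\Delta g$ is radially non-decreasing and $\Delta g(0)>0$, so $\Delta g>0$ everywhere; hence $g'>0$ for $r>0$ and $g$ remains strictly positive as long as it is defined.

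Because $\Delta g(r)\ge \Delta g(0)>0$ and $\Delta w_0(r)\to 0$ at infinity, we obtain $\Delta u(r)\le -\tfrac{1}{2}\Delta g(0)$ for $r$ large, and a standard radial double integration yields $u(r)\le -Cr^{2}$ for large $r$ and some $C>0$. For a matching lower barrier and global existence I would take $w_1(r)=\log 2-C_1 r^2-\cdots-C_{m-1}r^{2m-2}$ with the $C_i>0$ chosen so that $\Delta^{j}w_1(0)\le \Delta^{j}u(0)$ for $j=1,\ldots,m-1$, and prove $u\ge w_1$ by the mirror induction applied to $u-w_1$, using $\Delta^m w_1\equiv 0$ together with $\Delta^m u=(2m-1)!e^{2mu}>0$. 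This confines $u$ between two explicit controlled functions and gives existence of $u(r)$ for all $r\ge 0$.

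Finally, Theorem \ref{clas1} produces a decomposition $u=v+p$ with $p$ a radial polynomial of even degree at most $2m-2$, $\sup p<+\infty$, and $v(x)=-2\alpha\log|x|+o(\log|x|)$. The upper bound $u(r)\le -Cr^{2}$ combined with the logarithmic asymptotics of $v$ forces $\deg p\ge 2$. The delicate point I expect to be the hardest is ruling out $\deg p>2$: this amounts to showing that $\Delta^{j}u(r)\to 0$ as $r\to\infty$ for every $j\ge 2$, which I would extract from the vanishing $\Delta^{j}g(0)=0$ for $j=2,\ldots,m-1$ combined with the integrability of $e^{2mu}$, propagated through the iterated radial integral representation of the $\Delta^{j}g$. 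Once $\deg p=2$, $p(r)=a-Cr^{2}$ with $C>0$, and the claimed asymptotic $u(r)=-Cr^{2}+o(r^{2})$ as $r\to+\infty$ follows at once.
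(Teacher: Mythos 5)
Your upper-bound step is exactly the paper's: with $g:=w_0-u$ you get $g>0$, $\Delta g\geq \Delta g(0)>0$, hence $u\leq -Cr^2$; and your degree-$(2m-2)$ barrier does secure global existence. The genuine gap is at the point you yourself flag as delicate: excluding $\deg p>2$. The barrier $w_1=\log 2-C_1r^2-\cdots-C_{m-1}r^{2m-2}$ only yields $u\geq -Cr^{2m-2}$, which is compatible with $p$ having any even degree between $2$ and $2m-2$; and the route you sketch instead --- deducing $\Delta^j u(r)\to 0$ as $r\to\infty$ for $j\geq 2$ from $\Delta^j g(0)=0$ --- does not follow from your setup. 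The positivity you propagate gives $\Delta^j g\geq 0$, i.e.\ only the one-sided bound $\Delta^j u\leq \Delta^j w_0\to 0$; this still allows, say, $\Delta^2 p\equiv$ a strictly negative constant, i.e.\ $\deg p=4$. What is missing is a matching \emph{lower} bound on $\Delta^2 u$ at infinity, and the vanishing of Taylor coefficients at the origin does not supply it by itself.

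The paper closes this by taking the lower barrier to be $w_1:=w_0-r^2$ rather than a generic polynomial of degree $2m-2$; this is precisely where the special initial data enter. Since $u^{(2j)}(0)=w_0^{(2j)}(0)$ for $j\geq 2$ and only $u''(0)$ is lowered by $1$, the function $g:=u-w_1$ satisfies $g''(0)=1$ and $g^{(j)}(0)=0$ for $j=0,1,3,\ldots,2m-1$, so the comparison scheme can be rerun against this \emph{quadratic} barrier. The resulting bound $u\geq w_0-r^2=-r^2+O(\log r)$ pins down $\deg p=2$ at once (a radial polynomial of degree $\geq 4$ that is bounded above would violate it), and $u=v+p=-Cr^2+o(r^2)$ follows. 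To complete your argument you must either carry out the comparison with this sharper barrier or otherwise obtain two-sided control of $\Delta^2 u$ at infinity; as written, the final asymptotic is asserted but not proved.
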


\begin{proof}
As in the proof of Proposition \ref{esempio}, we can show that $g:= w_0-u\geq 0$ and $u(r)\leq -Cr^2$. To control $u$ from below, we use the function $w_1(r)=w_0(r)-r^2$, so that redefining $g:=u-w_1$, we have
$$g''(0)=1,\quad g^{(j)}(0)=0,\quad j=0,1,3,4,\ldots,2m-1.$$
and we can prove that $g\geq 0$ as before. Hence $u(r)$ exists for all $r\geq 0$, it is non-standard and $u(r)= -C r^2+O(r^4)$ at $\infty$, as $w_1$ bounds it from below.
\end{proof}

\begin{rmk} Using \eqref{cambiaR}, we can easily compute that in the above examples
$$\lim_{|x|\to \infty}R_g(x)\to -\infty,$$
where $g=e^{2u}g_{\R{2m}}$.
\end{rmk}

\section*{Appendix}

We prove here a few results used above.

\begin{lemma}\label{wkp} Assume that $u:B_4\rightarrow\R{}$ satisfies
\begin{eqnarray*}
\|\Delta u\|_{W^{k,p}(B_4)}&\leq& C\\
\|u\|_{L^1(B_4)}&\leq& C,
\end{eqnarray*}
for some $p\in (1,\infty)$. Then
$$\|u\|_{W^{k+2,p}(B_1)}\leq C.$$
\end{lemma}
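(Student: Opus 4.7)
The plan is to split $u$ into a Poisson part and a harmonic part and use standard $L^p$-theory plus interior harmonic estimates. Pick an intermediate radius, say $2$; since $\partial B_2$ is smooth, we may solve the Dirichlet problem
$$
\Delta w_1=\Delta u \text{ in }B_2,\qquad w_1=0\text{ on }\partial B_2.
$$
By the Calder\'on-Zygmund / Agmon-Douglis-Nirenberg $L^p$-theory for the Laplacian on smooth domains (see e.g.\ \cite{GT}), the hypothesis $\|\Delta u\|_{W^{k,p}(B_4)}\le C$ yields
$$
\|w_1\|_{W^{k+2,p}(B_2)} \le C\|\Delta u\|_{W^{k,p}(B_2)} \le C.
$$

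Next set $w_2:=u-w_1$, so that $\Delta w_2=0$ in $B_2$. Since $\|w_1\|_{L^1(B_2)}\le C\|w_1\|_{W^{k+2,p}(B_2)}\le C$ and $\|u\|_{L^1(B_4)}\le C$ by hypothesis, we obtain
$$
\|w_2\|_{L^1(B_2)}\le \|u\|_{L^1(B_2)}+\|w_1\|_{L^1(B_2)}\le C.
$$
Now $w_2$ is harmonic on $B_2$, so Proposition \ref{c2m} applied with $m=1$ (and rescaled from the $B_1\subset B_4$ setup to the $B_1\subset B_2$ setup) gives
$$
\|w_2\|_{W^{k+2,p}(B_1)}\le C\|w_2\|_{L^1(B_2)}\le C.
$$

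Combining, we get $\|u\|_{W^{k+2,p}(B_1)}\le \|w_1\|_{W^{k+2,p}(B_1)}+\|w_2\|_{W^{k+2,p}(B_1)}\le C$, which is the conclusion.

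I do not expect any real obstacle here: the only non-trivial input is the $L^p$-regularity for the Dirichlet problem (which holds since $\partial B_2$ is smooth and $1<p<\infty$), together with the interior $W^{k+2,p}$-estimate for harmonic functions in terms of the $L^1$-norm, which is classical (and in fact already recorded as Proposition \ref{c2m} in the paper). The slight subtlety is that one must work on a ball strictly inside $B_4$ to avoid boundary issues in the decomposition; choosing $B_2$ and then restricting to $B_1$ handles this cleanly.
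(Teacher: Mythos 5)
Your proof is correct and follows essentially the same route as the paper: both decompose $u$ into the solution of the zero-Dirichlet problem for $\Delta u$ on an intermediate ball (handled by standard $L^p$-theory) plus a harmonic remainder controlled through its $L^1$-norm. The only cosmetic difference is that the paper selects a good radius $r\in[2,4]$ by Fubini and estimates the harmonic part via the Poisson representation from its boundary $L^1$-norm, whereas you estimate it by the interior $L^1$-bound for harmonic functions (Proposition \ref{c2m} with $m=1$); both are standard and valid.
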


\begin{proof}
By Fubini's theorem we can choose $r>0$ with $2\leq r\leq 4$ such that
$$\|u\|_{L^1(\partial B_r)}\leq C \|u\|_{L^1(B_4)}.$$
Let's now write $u=u_1+u_2$, where
$$\left\{
\begin{array}{ll}
\Delta u_1=0& \textrm{in }B_r\\
u_1=u&\textrm{on }\partial B_r
\end{array}
\right.
\qquad
\left\{
\begin{array}{ll}
\Delta u_2=\Delta u& \textrm{in }B_r\\
u_2=0&\textrm{on }\partial B_r
\end{array}
\right.
$$
By standard $L^p$-estimates we have $\|u_2\|_{W^{k+2,p}(B_r)}\leq C \|\Delta u\|_{W^{k,p}(B_r)}$. From the representation formula of Poisson
$$u_1(x)=\int_{\partial B_r} u_1(y)\Gamma(x-y)dS(y),$$
we obtain $\|u_1\|_{C^k(B_1)}\leq C_k\|u_1\|_{L^1(\partial B_r)}$ for every $k\geq 0$ .
\end{proof}

\medskip

\noindent\emph{Proof of Proposition \ref{c2m}.} Let $\|h\|_{L^1(B_{4})}\leq C$, and let us assume $n>2$. We proceed by steps.

\medskip

\noindent\emph{Step 1.} We show by induction on $j$ that
\begin{equation}\label{inddelta}
\|\Delta^{m-j} h\|_{L^\infty(B_{2})}\leq C.
\end{equation}
The step $j=0$ is obvious, as $\Delta^m h\equiv 0$. Let us prove the step $j\geq 1$. Let
$$G_{2r}(x):=\frac{1}{(2-n)\omega_n}\bigg(\frac{1}{|x|^{n-2}}-\frac{1}{(2r)^{n-2}}\bigg)$$
be the Green function for the Laplace operator on $B_{2r}$ with singularity at $0$.
Then
$$ \Delta^{m-j}h(0)=\Intm_{\partial B_{2r}} \Delta^{m-j}hdx+\int_{B_{2r}} G_{2r} \Delta^{m-j+1}h dx.$$
By inductive hypothesis and the scaling property of $G_{2r}$, the last term is bounded by $C r^2$, hence
$$\Delta^{m-j}h(0)\leq\Intm_{\partial B_{2r}} \Delta^{m-j}h dx +Cr^2,$$
and integrating with respect to $r$ on $[1/2,1]$, we obtain 
\begin{equation}\label{eqm-j}
\Delta^{m-j}h(0)\leq \Intm_{B_{2}} \Delta^{m-j}h dx +C.
\end{equation}
To estimate $\intm_{B_{2}} \Delta^{m-j}h dx$, we use Pizzetti's formula for $h$ at $x\in B_2$,
$$c_{m-j}\Delta^{m-j}h(x)=-\sum_{i=0}^{m-j-1}c_i\Delta^i h(x)\underbrace{-\sum_{i=m-j+1}^{m}c_i\Delta^i h(x)+\Intm_{B_1(x)}hdy}_{\leq C} $$
by the inductive hypothesis again, and the $L^1$-bound on $h$ and get
\begin{equation}\label{deltaR0}
c_{m-j}\Delta^{m-j}h(x) \leq  -\sum_{i=0}^{m-j-1} c_{i} \Delta^{i} h(x)+ C.
\end{equation}
Averaging in \eqref{deltaR0} over $B_2$ and using \eqref{eqm-j}, we find
\begin{equation*}
c_{m-j}\Delta^{m-j}h(0)\leq -\sum_{i=0}^{m-j-1}\bigg(c_i \Intm_{B_2}\Delta^i h(x)dx\bigg)+C.
\end{equation*}
and its scaled version
\begin{equation}\label{deltar}
c_{m-j}\Delta^{m-j}h(0)\leq -\sum_{i=0}^{m-j-1}\bigg(c_ir^{2(i-m+j)} \Intm_{B_{2r}}\Delta^i h(x)dx\bigg)+Cr^{2(j-m)}.
\end{equation}
Consider now a non-negative function $\varphi\in C^\infty_c((1,2))$, with $\int_1^2\varphi(r) dr=1$. From \eqref{deltar}, we find
$$
c_{m-j}\Delta^{m-j}h(0)\leq -\sum_{i=0}^{m-j-1}c_i\int_1^{2} \bigg(r^{2(i-m+j)} \Intm_{B_{2r}}\Delta^{i} h(x)dx\;\varphi(r)\bigg)dr+C.
$$
Each term in the sum on the right-hand side can be written as
\begin{eqnarray*}
&&\bigg|C\int_1^{2} r^{2(i-m+j)-n} \int_{\partial B_{2r}}\frac{\partial\Delta^{i-1} h}{\partial\nu}dS\varphi(r)dr\bigg|\\
&\leq&C\bigg|\int_{B_{2}\bs B_1} r^{2(i-m+j)-n} \frac{\partial\Delta^{i-1} h(x)}{\partial\nu}\varphi(|x|)dx\bigg|\\
&=&C\int_{B_{2}\bs B_1}|h(x)|\bigg|  \frac{\partial}{\partial\nu}\Delta^{i-1}\big( r^{2(i-m+j)-n}\varphi(|x|)\big)\bigg|dx\\
&\leq& C \Intm_{B_{2}}|h(x)|dx.
\end{eqnarray*}
Working with $-h$ and observing the local character of the above estimates, we obtain \eqref{inddelta}.

\medskip

\noindent\emph{Step 2.} Fix $\ell\geq m$. We can prove inductively that
$$\|\Delta^{\ell-j}h\|_{W^{2j,p}(B_2)}\leq C(p).$$
The step $j=0$ is obvious, as $\Delta^\ell h\equiv 0$. For the inductive step, we see that by Lemma \ref{wkp} applied to $\Delta^{\ell-j}h$ (and a simple covering argument to fix the radii), we have
$$\|\Delta^{\ell-j}h\|_{W^{2j,p}(B_1)}\leq C\|\Delta (\Delta^{\ell-j}h)\|_{W^{2j-2,p}(B_2)}+C\underbrace{\|\Delta^{\ell-j}h\|_{L^1(B_2)}}_{\leq C \textrm{ by Step 1}}\leq C, $$
for every $1 < p< \infty$, and the usual covering argument extends the estimate to $B_2$. Therefore $\|h\|_{W^{2\ell,p}(B_1)}\leq C(p,\ell)$, and we conclude applying Sobolev's theorem.
\hfill$\square$

\begin{prop}\label{propmax} Let $u\in C^{2m}(\overline{B}_1)$ such that
\begin{equation}\label{maxpr}
\left\{
\begin{array}{ll}
(-\Delta)^m u\leq C_1 &\textrm{in } B_1\\
(-\Delta)^j u\leq C_1 & \textrm{on }\partial B_1 \textrm{ for }0\leq j\leq m-1
\end{array}
\right.
\end{equation}
Then there exists a constant $C$ independent of $u$ such that
$$u\leq C\quad \textrm{in }B_1.$$
If $C_1=0$ in \eqref{maxpr}, then $u < 0$ in $B_1$, unless $u\equiv 0$.
\end{prop}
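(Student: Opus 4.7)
\medskip

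\noindent\textbf{Proof proposal for Proposition \ref{propmax}.}
The plan is to reduce the polyharmonic inequality to a chain of second-order problems by introducing the auxiliary functions
$$v_j := (-\Delta)^j u, \qquad j=0,1,\ldots,m,$$
which satisfy the Poisson relations $-\Delta v_j = v_{j+1}$ for $0\leq j\leq m-1$, together with the hypotheses $v_m\leq C_1$ in $B_1$ and $v_j\leq C_1$ on $\partial B_1$ for $0\leq j\leq m-1$. The bound for $u=v_0$ will then be obtained by iterating the ordinary (weak) maximum principle for $-\Delta$ starting from $v_{m-1}$ and working down to $v_0$.

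For the iterative step, I would compare $v_{m-1}$ with the explicit solution $\phi_\lambda$ of $-\Delta\phi_\lambda=\lambda$ in $B_1$ with $\phi_\lambda=C_1$ on $\partial B_1$, which is an explicit radial quadratic polynomial bounded by a constant $M(\lambda,C_1)$. Applying this with $\lambda=C_1$ (the bound on $v_m$), the function $w:=v_{m-1}-\phi_{C_1}$ satisfies
$$-\Delta w \leq 0 \text{ in } B_1, \qquad w\leq 0 \text{ on }\partial B_1,$$
so the weak maximum principle for $-\Delta$ gives $v_{m-1}\leq M_1$ in $B_1$ for some constant $M_1=M_1(C_1)$. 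Inductively, once we know $v_{j+1}\leq M_{j+1}$ in $B_1$, the inequality $-\Delta v_j\leq M_{j+1}$ together with $v_j\leq C_1$ on $\partial B_1$ yields $v_j\leq M_j$ by the same comparison with $\phi_{M_{j+1}}$. Carrying the iteration down to $j=0$ produces the desired bound $u\leq C$ with $C=C(C_1,m,n)$.

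For the strict part with $C_1=0$, the same iteration now gives $v_j\leq 0$ in $B_1$ for every $0\leq j\leq m-1$; in particular $-\Delta u=v_1\leq 0$, so $u$ is subharmonic and nonpositive in $B_1$. If $u\not\equiv 0$, I would invoke the strong maximum principle: if $u(x_0)=0$ at some interior point $x_0\in B_1$, then the maximum $0$ of the subharmonic function $u$ would be attained at an interior point, forcing $u\equiv 0$ in $B_1$, contradicting the assumption. Hence $u<0$ throughout $B_1$.

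The main technical point to be careful about is the monotone propagation of the sign information from the innermost Laplacian outward, and in particular justifying the comparison at each step with a suitable bounded barrier $\phi_\lambda$; once the iteration scheme is set up, each individual step is only a standard application of the weak (resp. strong) maximum principle for the Laplacian, and all constants $M_j$ depend only on $C_1$, $m$ and $n$, not on $u$ itself.
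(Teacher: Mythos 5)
Your proposal is correct and follows essentially the same route as the paper: the paper argues by induction on $m$, reducing to the second-order case via $v:=-\Delta u$ and handling $m=1$ by subtracting a quadratic barrier $C|x|^2$, which is exactly your iteration through $v_j=(-\Delta)^j u$ with the comparison functions $\phi_\lambda$. Your explicit treatment of the strict case via the strong maximum principle fills in what the paper dismisses with ``similarly if $C_1=0$.''
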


\begin{proof} By induction on $m$. The case $m=1$ follows from the maximum principle, applied to the function $v(x):=u(x)-C|x|^2$, which is subharmonic for $C$ large enough. Assume now that the case $m-1$ has been dealt with and let us consider $u$ satisfying \eqref{maxpr}. Then $v:=-\Delta u$ satisfies $v\leq C$ in $B_1$ by inductive hypothesis. Applying the case $m=1$ again we conclude. Similarly if $C_1=0$.
\end{proof}

\begin{prop}[Fundamental solution]\label{fund} For $m\geq 1$, set
\begin{equation}\label{gammam}
\gamma_{m}:=\omega_{2m} 2^{2m-2}[(m-1)!]^2,
\end{equation}
where $\omega_{2m}:=|S^{2m-1}|=\frac{(2\pi)^m}{(2m-2)!!}$. Then the function
$$K(x):=\frac{1}{\gamma_m}\log\frac{1}{|x|}$$
is a fundamental solution of $(-\Delta)^m$ in $\R{2m}$, i.e. $(-\Delta)^m K=\delta_0$.
\end{prop}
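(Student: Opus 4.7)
\noindent\emph{Proof plan for Proposition \ref{fund}.} The function $K(x)=\gamma_m^{-1}\log(1/|x|)$ is locally integrable on $\R{2m}$, so it defines a distribution, and it is smooth and radial away from the origin. The plan is to reduce the computation to the known fundamental solution of $-\Delta$, by iterating the radial Laplacian formula $m-1$ times.

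First, I would verify that $(-\Delta)^m K \equiv 0$ on $\R{2m}\setminus\{0\}$ by computing the iterated Laplacians explicitly as ordinary (not distributional) derivatives. Using the radial formula $\Delta r^{\alpha}=\alpha(\alpha+n-2)r^{\alpha-2}$ in dimension $n=2m$, I would get $(-\Delta)r^{-s}=s(2m-2-s)r^{-s-2}$ for $x\neq 0$, and in particular $(-\Delta)\log(1/r)=(2m-2)/r^{2}$. Iterating then gives, by a short induction, that for $1\le k\le m-1$ and $x\neq 0$,
\begin{equation*}
(-\Delta)^k K(x)=\frac{a_k}{\gamma_m}\,|x|^{-2k},\qquad a_k:=2(m-1)\prod_{j=1}^{k-1}4j(m-1-j),
\end{equation*}
with the convention that the empty product is $1$. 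At the top index $k=m-1$ the next application of $-\Delta$ as an ordinary function vanishes (the coefficient $4(m-1)(m-1-(m-1))=0$), which is consistent with $(-\Delta)^m K=0$ pointwise off the origin.

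Next, to capture the contribution at the origin I would compute $a_{m-1}$ in closed form, obtaining
\begin{equation*}
a_{m-1}=2(m-1)\cdot 4^{m-2}[(m-2)!]^2,
\end{equation*}
so that $(-\Delta)^{m-1}K=\frac{a_{m-1}}{\gamma_m}|x|^{-(2m-2)}$ as an $L^1_{\loc}$ function on all of $\R{2m}$. Since the classical fundamental solution of $-\Delta$ in $\R{2m}$ is $\frac{1}{(2m-2)\omega_{2m}}|x|^{-(2m-2)}$, the distributional identity
\begin{equation*}
(-\Delta)^m K=\frac{a_{m-1}(2m-2)\omega_{2m}}{\gamma_m}\,\delta_0
\end{equation*}
follows. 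A direct simplification yields
\begin{equation*}
a_{m-1}(2m-2)\omega_{2m}=4(m-1)^2[(m-2)!]^2\cdot 4^{m-2}\omega_{2m}=2^{2m-2}[(m-1)!]^2\omega_{2m}=\gamma_m,
\end{equation*}
so the coefficient is $1$ and $(-\Delta)^m K=\delta_0$, as claimed.

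The only real obstacle is the bookkeeping of the constants $a_k$ and the final simplification to match the stated $\gamma_m$; everything else is standard distributional calculus combined with the known fundamental solution of $-\Delta$. To make the distributional step rigorous I would, if needed, test against $\varphi\in C_c^\infty(\R{2m})$, integrate by parts $m$ times on $\R{2m}\setminus B_\varepsilon(0)$, and send $\varepsilon\to 0$: the boundary terms corresponding to indices $k<m-1$ vanish because $(-\Delta)^kK=O(\varepsilon^{-2k})$ against surface measure of order $\varepsilon^{2m-1}$, while the single surviving boundary term at $k=m-1$ reproduces exactly $\varphi(0)$ with the constant computed above.
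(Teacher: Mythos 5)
Your proposal is correct and follows essentially the same route as the paper: compute the iterated radial Laplacians of $\log(1/r)$ in $\R{2m}$ to identify $(-\Delta)^{m-1}K$ as an explicit multiple of $|x|^{-(2m-2)}$ (your constant $a_{m-1}=2^{2m-3}(m-1)!(m-2)!$ agrees with the paper's \eqref{Deltalog}), and then recover $\delta_0$ via integration by parts on $\R{2m}\setminus B_\ve(0)$, where only the top boundary term survives. The final bookkeeping $a_{m-1}(2m-2)\omega_{2m}=\gamma_m$ checks out.
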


\begin{proof} The case $m=1$ is well-known, so we shall assume $m\geq 2$. Set $r:=|x|$. For radial functions we have
$\Delta=\frac{\partial^2}{\partial r^2}+\frac{n-1}{r}\frac{\partial}{\partial r},$ hence for $j\geq 1$
$$-\Delta \log\frac{1}{r}=\frac{2(m-1)}{r^2}, \qquad -\Delta\frac{1}{r^{2j}}=\frac{4j(m-1-j)}{r^{2j+2}}.\label{eqdelta2}$$
Then
\begin{eqnarray}
\label{eqdelta3}
(-\Delta)^j\log\frac{1}{r}&=& 2^{2j-1}\frac{(j-1)!(m-1)!}{(m-j-1)!}\frac{1}{r^{2j}}\\
\label{Deltalog}
(-\Delta)^{m-1}\log\frac{1}{r}&=&2^{2m-3}(m-2)!(m-1)!\frac{1}{r^{2m-2}}.
\end{eqnarray}
Given a function $\varphi\in C^\infty_c(\R{2m})$, we can apply the usual procedure of integrating by parts in $\R{2m}\bs B_\ve(0)$ using
$$\lim_{\varepsilon\rightarrow 0}\int_{\partial B_\varepsilon(0)}|D^k K|dS=0,\quad 0\leq k\leq 2m-2,$$
to obtain
\begin{eqnarray*}
\int_{\R{2m}}(-\Delta)^m\varphi K dx&=&\lim_{\ve\rightarrow 0}\int_{\partial B_\ve(0)}-\varphi\frac{\partial(-\Delta)^{m-1}K}{\partial \nu}dS\\
&=&\Intm_{\partial B_\ve(0)}\varphi dS\rightarrow \varphi(0).
\end{eqnarray*}
\end{proof}

%\newpage

\end{document}